\setlist[itemize]{noitemsep, topsep=0pt}
\setlist[enumerate]{noitemsep, topsep=0pt}
\newcommand{\N}{\mathbb{N}}
\newcommand{\Z}{\mathbb{Z}}
\newcommand{\R}{\mathbb{R}}
\newcommand{\C}{\mathbb{C}}
\newcommand{\II}{\mathbb{I}}
\newcommand{\E}{\mathbb{E}}
\renewcommand{\P}{\mathbb{P}}
\newcommand{\be}{\begin{equation}}
\newcommand{\ee}{\end{equation}}
\newcommand{\supp}{\ensuremath{\mathrm{supp}}}
\definecolor{dark-green}{RGB}{0,200,0}
\newcommand{\david}[1]{{\color{blue}{#1}}}
\newtheorem{theorem}{Theorem}
\newtheorem{lemma}[theorem]{Lemma}
\newtheorem{corollary}[theorem]{Corollary}
\newtheorem{prop}[theorem]{Proposition}
\theoremstyle{definition}
\newtheorem{remark}[theorem]{Remark}
\newtheorem{algorithm}{Algorithm}
\DeclareMathOperator{\median}{med}
\title[A simple universal algorithm for integration]{A simple universal algorithm for high-dimensional integration}
\author{Takashi Goda$^1$ and David Krieg$^2$}
\address{$^1$Graduate School of Engineering, The University of Tokyo, Japan}
\address{$^2$Department of Mathematics, University of Passau, Germany.}
\email{goda@frcer.t.u-tokyo.ac.jp, david.krieg@uni-passau.de}
\thanks{The work of T.G. is supported by JSPS KAKENHI Grant Number 23K03210}
\date{\today}
\keywords{%
High-dimensional integration,
lattice rules,
randomized algorithms,
median trick,
weighted Korobov spaces,
tractability.
}
\subjclass[2020]{%
65D30; 
41A25, 
41A63, 
46E35, 
65C05. 
}
\begin{document}

\begin{abstract}
We present a simple universal algorithm
for high-dimensional integration
which has the optimal error rate (independent of the dimension)
in all weighted Korobov classes
both in the randomized and the deterministic setting.
Our theoretical findings are complemented by numerical tests.
\end{abstract}

\maketitle

Lattice rules are a powerful tool for estimating the integral of a high-dimensional function $f\colon [0,1]^d \to \C$. We refer to the monograph \cite{DKP22}.
Given a positive integer $p$ (typically a prime) and a generating vector $z\in \{1:p-1\}^d$, where $\{1:p-1\}$ denotes the set $\{1,2,\ldots,p-1\}$, the lattice rule $Q_p^z$ is defined by
\[
 Q_p^z(f) \,:=\, \frac{1}{p} \sum_{k=0}^{p-1} f\left( \left\{ \frac{k z}{p} \right\}  \right).
\]
Here, $\{\cdot\}$ denotes the (entry-wise) fractional part of a vector.

It is a very active research topic how to choose a good generating vector $z$, see \cite{DG21,DKLP15,EKNO21,Kuo03,NC06,SR02} among many others
or \cite[Chapter~3]{DKP22} for an overview.
Usually, the choice of $z$ highly depends on the precise model assumptions for the integrand $f$.
On the other hand, practical applications usually do not come with precise model assumptions.
In this paper, we want to propagate a very simple 
method to deal with this problem
and propose a randomized quadrature rule 
which is both easy to implement and universally near-optimal
for a wide range of function classes.
 
Namely, we choose $p$ as a random prime between $n/2$ and $n$ 
and $z$ randomly from the full set of generating vectors,
repeat this for several times, 
and take the median of the acquired estimates of the integral.
Precisely, we propose the following randomized algorithm.
Here, $\mathcal{P}_n$ denotes the set of primes between $\lceil n/2 \rceil+1$ and $n$,
and $h\colon \N \to [1,\infty)$ is an arbitrary function
with $h(n)\to \infty$ for $n\to \infty$. 
We think of a function that increases very slowly, 
for instance, we may take $h(n) = \max(1,\log n)$ or even $h(n) = \max(1,\log \log n)$.

\begin{samepage}
\begin{algorithm}\label{alg:median}
Let $n,d\in\N$ and 
let $N=2\lceil h(n) \log_2(n)\rceil+1$. 
\begin{enumerate}
\item For $1\le k \le N$, choose a random prime $p_k$ uniformly from $\mathcal{P}_n$
and a random vector $z_k$ uniformly from $\{1:p_k-1\}^d$.
\item For $f\colon [0,1]^d \to \C$, put
\[
M_n(f) \,:=\, M_n^{(p_k,z_k)_{k}}(f) \,:=\, \median\Big\{ Q_{p_k}^{z_k}(f) \ : \ 1\le k \le N \Big\}.
\]
\end{enumerate}
\end{algorithm}
\end{samepage}

Here, the median of complex numbers
is defined by taking the median of their real and imaginary parts separately.
The algorithm $M_n$  
uses no more than 
$c\,n\log n \cdot h(n)$ function values,
where $c$ is an absolute constant.

Algorithm~\ref{alg:median} combines several tricks, none of which are new:
\begin{itemize}
    \item For univariate functions, already Bakhvalov observed in \cite{Bah61} 
    that choosing the number of points randomly between $n/2$ and $n$
    can improve the expected error by a factor $n^{-1/2}$.
    This was extended to the multivariate setting for lattice rules with generating vectors of the form $z = (1, g, \ldots, g^{d-1})$ for $g \in \{1:p-1\}$ in the same work \cite{Bah61}. With regard to tractable error bounds in high dimensions, 
    the idea of using ``random $p$'' has been revisited in \cite{KKNU19}, and constructive algorithms have been proposed in subsequent works \cite{DGS22, G24, KNW23}.
    \item 
    Although it can be difficult to find a good generating vector explicitly, 
    it is known that the set of good generating vectors is relatively large, see \cite[Theorem~2]{DSWW06} and \cite[Section~2.3]{KKNU19}. 
    Thus, if we take the generating vector at random from the full set of generating vectors,
    there is at least a decent chance that we obtain a good quadrature result.
    \item Whenever we have a method that results in good quadrature with a constant probability larger than $1/2$, we can apply this method several times and take the median in order to get good quadrature with a very high probability. This is known as the \emph{median trick} and was exploited, for instance, in \cite{CXZ24,GL22,GSM24,HR23,KNR19,KR19,PO23,PO24}. 
\end{itemize}

The contribution of our work is the combination of all of these tricks
in order to obtain a simple universal algorithm with near-optimal error bounds in a wide range of function classes.
Our main result can be stated as follows.
Here, $e^{\rm det}_{d,\alpha,\gamma}$ and $e^{\rm ran}_{d,\alpha,\gamma}$ denote the worst-case error on the Korobov class with smoothness $\alpha>1/2$ and product weights $\gamma \in \R_+^d$ on $[0,1]^d$, in the deterministic and the randomized setting, respectively.
All these quantities will be defined in detail in Section~\ref{sec:preliminaries}.
Note that, although Algorithm~\ref{alg:median} is a randomized algorithm, the estimate $M_n^{(p_k,z_k)_{k}}(f)$ is deterministic if $(p_k,z_k)_{k=1,\ldots,N}$ are fixed. This allows us to consider the worst-case error for each realization $M_n^{(p_k,z_k)_{k}}$.

\begin{theorem}\label{thm:main}
Algorithm~\ref{alg:median} has the following properties.
For any $\alpha>1/2$ and $\gamma\in (0,1]^\N$,
and any $\varepsilon >0$,
there are positive constants $n_0=n_0(\alpha,\varepsilon,\gamma,d)$ and $C=C(\alpha,\varepsilon,\gamma,d)$ such that the following holds.
\begin{enumerate}
\item For any 
$n\ge n_0$,
we have
\begin{equation*}
e^{\rm ran}_{d,\alpha,\gamma}(M_n^{(p_k,z_k)_k}) 
\,\le\, C \, n^{-\alpha-1/2+\varepsilon}.
\end{equation*}
\item For any $n\ge 2$,
we have
\begin{equation*}
e^{\rm det}_{d,\alpha,\gamma}(M_n^{(p_k,z_k)_k}) \,\le\, C \, n^{-\alpha+\varepsilon},
\end{equation*}
with probability at least $1-n^{-h(n)}$.
\end{enumerate}
Here, both $n_0$ and $C$ are independent of the dimension $d$
if
$\gamma \in \ell_{1/\alpha}$. 
\end{theorem}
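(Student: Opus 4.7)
The plan is to combine two probabilistic estimates for a \emph{single} random lattice rule $Q_p^z$ (with $p$ uniform on $\mathcal{P}_n$ and $z$ uniform on $\{1:p-1\}^d$) and then amplify them via the elementary median property: if at least $(N+1)/2$ of the numbers $a_1,\dots,a_N$ lie in an interval $[y-\eta,y+\eta]$, then so does their median. The two inputs I will extract from the lattice-rule literature are (i)~a Bakhvalov-type variance bound
\[
\mathbb{E}_{p,z}\,|Q_p^z(f)-I(f)|^2 \,\le\, C_1\,\|f\|^2\,n^{-2\alpha-1+\varepsilon}
\]
valid for every $f$ in the weighted Korobov space, obtained by expanding the integration error in the dual lattice and controlling the ``aliasing'' contribution via the elementary estimate $\Pr_p[p\mid\gcd(h)]\lesssim \log(\max_j|h_j|)/n$, in the spirit of \cite{KKNU19,DGS22,G24,KNW23}; and (ii)~a density statement for good generating vectors, namely that for every prescribed $\delta\in(0,1/2)$ there is a constant $C_2$ such that
\[
\Pr_{p,z}\!\left[\,e(Q_p^z)\le C_2\,n^{-\alpha+\varepsilon}\,\right]\,\ge\,1-\delta,
\]
which one obtains by applying Markov's inequality to $e(Q_p^z)^{2\lambda}$ for a suitable $\lambda<1$ close to $1$, cf.\ \cite[Theorem~2]{DSWW06} and \cite[Section~2.3]{KKNU19}. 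Under $\gamma\in\ell_{1/\alpha}$ both $C_1$ and $C_2$ can be taken independent of $d$.

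Part~(2) is then the more direct of the two. Applying Hoeffding's inequality to the independent $\{0,1\}$-valued events $B_k=\{e(Q_{p_k}^{z_k})\le C_2\,n^{-\alpha+\varepsilon}\}$, each of probability at least $1-\delta$, I choose $\delta$ small enough that $2(1/2-\delta)^2>\ln 2$; then the number of $B_k$ that hold exceeds $N/2$ except on an event of probability $\exp(-2N(1/2-\delta)^2)\le n^{-h(n)}$. On the complement, for \emph{every} $f$ in the Korobov unit ball at least $(N+1)/2$ of the values $Q_{p_k}^{z_k}(f)$ lie within $C_2\,n^{-\alpha+\varepsilon}$ of $I(f)$, so by the median property so does $M_n(f)$; taking the supremum over $\|f\|\le 1$ yields the claimed worst-case bound for $M_n^{(p_k,z_k)_k}$.

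For part~(1) I fix $f$ with $\|f\|\le 1$ and apply Chebyshev's inequality to~(i) to obtain $|Q_p^z(f)-I(f)|\le C_3\,n^{-\alpha-1/2+\varepsilon/2}$ with probability at least $1-\delta$. The same Hoeffding argument shows that, with probability at least $1-n^{-h(n)}$, at least $(N+1)/2$ of the $N$ lattice rules satisfy this bound, so the median property transfers the same estimate to $|M_n(f)-I(f)|$. To convert this high-probability bound into a bound on $\mathbb{E}|M_n(f)-I(f)|^2$, I split the expectation along this event: the good event contributes at most $C_3^2\,n^{-2\alpha-1+\varepsilon}$, while on the bad event I use the deterministic estimate $|M_n(f)-I(f)|\le 2\|f\|_{\infty}\le 2\,C_{\mathrm{emb}}(d,\alpha,\gamma)\,\|f\|$ provided by the embedding of the Korobov space into $C([0,1]^d)$ for $\alpha>1/2$. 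For $n\ge n_0(\alpha,\varepsilon,\gamma,d)$ the resulting term $4C_{\mathrm{emb}}^2\,n^{-h(n)}$ is dominated by the good-event contribution, completing the bound on $e^{\mathrm{ran}}_{d,\alpha,\gamma}(M_n)$.

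The conceptual crux of the argument is the observation used in part~(2): amplifying the \emph{rule-wise} density statement~(ii) via the median, rather than pointwise Chebyshev as in part~(1), bypasses any $\varepsilon$-net or covering argument over the infinite-dimensional Korobov unit ball and automatically delivers a worst-case error bound for each realization. The main technical obstacle is consequently to assemble the two probabilistic inputs~(i) and~(ii) with explicit, dimension-independent constants under the summability assumption $\gamma\in\ell_{1/\alpha}$; once those are in place, Chebyshev, Hoeffding and the median property do the rest.
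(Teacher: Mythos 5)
Your part~(2) is essentially sound and is in substance the paper's own argument: a density statement for good generating vectors (the paper's Lemma~\ref{lem:set_good_vectors}, via averaging $[e^{\rm det}]^{1/\lambda}$ over $z$ and Markov) plus exponential amplification of the ``more than half the rules are good'' event, which yields a worst-case bound on the realized median rule without any net over the unit ball. Two small repairs are needed there: since the integrands are complex and the median is taken coordinatewise, the interval argument costs a factor $\sqrt 2$ (harmless); and your Hoeffding condition $2(1/2-\delta)^2>\ln 2$ is unsatisfiable for $\delta\in(0,1/2)$ --- with $N\ge 2h(n)\log_2 n$ the correct requirement is $4(1/2-\delta)^2\ge\ln 2$, i.e.\ $\delta\lesssim 0.08$, which is fine. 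Also, the exponent in your Markov step should be $1/\lambda$ with $\lambda$ close to $\alpha$ (so that $\alpha/\lambda>1$ and the dual-lattice sum converges), not $2\lambda$ with $\lambda$ close to $1$.

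Part~(1), however, rests on a false input. The claimed variance bound $\mathbb{E}_{p,z}|Q_p^z(f)-I_d(f)|^2\le C_1\|f\|^2 n^{-2\alpha-1+\varepsilon}$ with $z$ uniform on the \emph{full} set $\{1:p-1\}^d$ does not hold: take $d=2$ and $f(x)=\gamma_{\{1,2\}}\,e^{2\pi i(x_1+x_2)}$, which has $\|f\|_{d,\alpha,\gamma}=1$. The only nonzero frequency is $h=(1,1)$, and $h\cdot z\equiv_p 0$ happens for a uniformly random $z$ with probability of order $1/p\asymp 1/n$, so $\mathbb{E}_{p,z}|Q_p^z(f)-I_d(f)|^2\asymp n^{-1}$, which is much larger than $n^{-2\alpha-1+\varepsilon}$ for every $\alpha>1/2$ (and even the first moment is of order $n^{-1}$, already ruling out the target rate in expectation for a single full-set rule). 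Your heuristic for proving (i) controls only the frequencies annihilated by the random prime ($p\mid h$) and ignores the order-$1/p$ probability that an unlucky $z$ places a fixed low frequency in the dual lattice; that contribution is exactly what destroys the $n^{-\alpha-1/2}$ rate in mean square. Consequently the Chebyshev step that is supposed to give the per-rule bound $|Q_p^z(f)-I_d(f)|\le C_3 n^{-\alpha-1/2+\varepsilon/2}$ with probability $1-\delta$ has no valid source. The statement you need is true, but it must be obtained the way the paper does: restrict $z$ to a good set $Z_{p,\tau}$ of relative measure $\tau$ (Lemma~\ref{lem:set_good_vectors}), prove the KKNU-type expectation bound $\mathbb{E}\,|Q_p^z(f)-I_d(f)|\lesssim n^{-\lambda-1/2+\delta}\|f\|$ \emph{conditionally on} $z\in Z_{p,\tau}$ (Proposition~\ref{prop:KKNU}), and then absorb the event $z\notin Z_{p,\tau}$ into the failure probability via Markov (Corollary~\ref{cor:constant-probability}); after that, your median amplification and the fallback bound $|M_n(f)-I_d(f)|\le C_{\mathrm{emb}}\|f\|$ on the exceptional event (which is legitimate, and dimension-independent for $\gamma\in\ell_{1/\alpha}$) do close the argument as in the paper.
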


We refer to Theorem~\ref{thm:randomized-detailed} and Theorem~\ref{thm:deterministic-detailed}
for further details. There, we obtain bounds on the respective errors not only for product weights but also for general weights. A simple argument in Remark~\ref{rem:product_weight} shows the dimension-independence of $n_0$ and $C$.

\begin{remark}[Non-periodic functions]\label{rem:nonperiodic}
    The Korobov classes with smoothness $\alpha>1/2$ contain only smooth functions that are one-periodic in each variable. 
    By applying the tent transformation $\phi: [0,1]\to [0,1]$, defined as $\phi(x)=1-|2x-1|$, to each node of the lattice rules component-wise, the results of this paper can be extended to the weighted half-period cosine spaces. These spaces contain smooth functions that are not necessarily periodic \cite{DNP14}. We also refer to \cite[Section~4]{DGS22} as a relevant work.
    Hence, for non-periodic integrands $f\colon [0,1]^d\to\C$, we propose the algorithm
    \[
     M_n^*(f) \,:=\, M_n(f\circ \Phi),
    \]
    where $\Phi$ is applied component-wise.
    We do not present the theoretical analysis for this algorithm,
    but it is not hard to see that the algorithm is reasonable since $f\circ \Phi$ is one-periodic and has the same integral as $f$.
    It is also possible to replace the tent transformation with an infinitely smooth transformation in order to preserve higher smoothness of the integrand, see, e.g., \cite{NUU17} and \cite[Section~5]{Ull17}, as well as the references therein. 
    One caution, however, is that such methods are likely to become intractable in high dimensions, see \cite{KSW07}.
\end{remark}

\begin{remark}
    The universality of quadrature algorithms (whether deterministic or randomized) with respect to different degrees of smoothness has been explored through various approaches; see \cite{GDMS24, KN17, Pan24, Ull17} among others. Randomized algorithms that are universal with respect to both smoothness and weights have been proposed in \cite{GL22, GSM24}, though only under the worst-case error criterion. The results of this paper exhibit a decisive difference in this regard.
\end{remark}

\section{Preliminaries}
\label{sec:preliminaries}

Let $f$ have an absolutely convergent Fourier series:
\[ f(x)=\sum_{h\in \Z^d}\hat{f}(h)\exp(2\pi i h\cdot x),\]
where $\hat{f}(h)$ denotes the $h$-th Fourier coefficient and $\cdot$ is the usual dot product. Let $\alpha>1/2$ be a real number and $\gamma=(\gamma_u)_{u\subset \N}$ be a sequence of positive real numbers with $0<\gamma_u\leq 1$. The weighted Korobov class with smoothness $\alpha$ and weights $\gamma$, denoted by $\mathcal{H}_{d,\alpha,\gamma}$, is defined as a reproducing kernel Hilbert space with the reproducing kernel
\[ K_{d,\alpha,\gamma}(x,y)=\sum_{h\in \Z^d}\frac{\exp(2\pi i h\cdot (x-y))}{(r_{d,\alpha,\gamma}(h))^2},\]
and the inner product
\[ \langle f,g\rangle_{d,\alpha,\gamma}=\sum_{h\in \Z^d}(r_{d,\alpha,\gamma}(h))^2\hat{f}(h)\overline{\hat{g}(h)}. \]
Here the function $r_{d,\alpha,\gamma}: \Z^d\to \R_{+}$ is defined by
\[ r_{d,\alpha,\gamma}(h)=\gamma_{\supp(h)}^{-1}\prod_{j\in \supp(h)}|h_j|^{\alpha},\]
with $\supp(h):=\{j: h_j\neq 0\}$, where the empty product is set to $1$.
The induced norm of this class is denoted by $\| f\|_{d,\alpha,\gamma}=\sqrt{\langle f,f\rangle_{d,\alpha,\gamma}}$.

It is well-known that when $\alpha$ is an integer, the Korobov class norm involves the mixed partial derivatives of periodic functions $f$ up to order $\alpha$ in each variable, see \cite[Section~2.4]{DKP22}.

For the sum of the Fourier weights,
we use the notation
\begin{align}\label{eq:def-V}
 V_d&(\alpha,\gamma)
 := \sum_{h\in\Z^d\setminus\{0\}} \frac{1}{r_{d,\alpha,\gamma}(h)}
 = \sum_{\emptyset \neq u\subseteq \{1:d\}}\,\sum_{h\in (\Z\setminus\{0\})^u} \gamma_u\, \prod_{j\in u}|h_j|^{-\alpha} \notag \\ 
 &= \sum_{\emptyset \neq u\subseteq \{1:d\}}\,\gamma_u\, \Big(\sum_{k\in \Z\setminus\{0\}}  |k|^{-\alpha}\Big)^{|u|}
 =\sum_{\emptyset \neq u\subseteq \{1:d\}}\gamma_u\cdot (2\zeta(\alpha))^{|u|},
\end{align}
where $\zeta$ denotes the Riemann zeta function.

Given $p$ and $z\in \{1:p-1\}^d$, the worst-case error of the lattice rule $Q_p^z$ in the weighted Korobov class $\mathcal{H}_{d,\alpha,\gamma}$ is defined by
\[ e^{\rm det}_{d,\alpha,\gamma}(Q_p^z):=\sup_{\substack{f\in \mathcal{H}_{d,\alpha,\gamma}\\ \| f\|_{d,\alpha,\gamma}\leq 1}}\left| I_d(f) - Q_p^z(f) \right|, \]
where $I_d(f)$ denotes the true integral of $f$, i.e.,
\[ I_d(f)=\int_{[0,1]^d} f(x)\, \mathrm{d}x.\]
It follows from \cite[Theorem~2.19]{DKP22} that
\[ \left(e^{\rm det}_{d,\alpha,\gamma}(Q_p^z)\right)^2 = \sum_{\substack{h\in \Z^d\setminus \{0\}\\ h\cdot z\equiv_p 0 }}\frac{1}{(r_{d,\alpha,\gamma}(h))^2},\]
where $h\cdot z\equiv_p 0$ means that $h\cdot z$ is congruent to zero modulo $p$.

Consider a randomized algorithm $M_n$ that is defined as a random variable $(M_n^{\omega})_{\omega\in \Omega}$.
Here, $(\Omega,\Sigma,\mathbb P)$ is a probability space and $M_n^{\omega}: \mathcal{H}_{d,\alpha,\gamma} \to \C$ is a deterministic algorithm (e.g., a quadrature rule) for every $\omega\in\Omega$. Assuming that $(M_n^{\omega}(f))_{\omega\in \Omega}$ is measureable for each $f$, the randomized error of $M_n$ in the weighted Korobov class $\mathcal{H}_{d,\alpha,\gamma}$ is then defined by
\[ e^{\rm ran}_{d,\alpha,\gamma}(M_n):=\sup_{\substack{f\in \mathcal{H}_{d,\alpha,\gamma}\\ \| f\|_{d,\alpha,\gamma}\leq 1}}\E_{\omega}\left| I_d(f) - M_n^{\omega}(f) \right|. \]
In particular, the randomized error of our Algorithm~\ref{alg:median} is given by
\begin{align*}
    & e^{\rm ran}_{d,\alpha,\gamma}((M_n^{(p_k,z_k)_k})) \\
    & \quad = \sup_{\substack{f\in \mathcal{H}_{d,\alpha,\gamma}\\ \| f\|_{d,\alpha,\gamma}\leq 1}}\frac{1}{|\mathcal{P}_n|}\sum_{p_1\in \mathcal{P}_n}\frac{1}{(p_1-1)^d}\sum_{z_1\in \{1:p_1-1\}^d}\cdots\\
    & \quad \quad \quad \quad \quad \frac{1}{|\mathcal{P}_n|}\sum_{p_N\in \mathcal{P}_n}\frac{1}{(p_N-1)^d}\sum_{z_N\in \{1:p_N-1\}^d} \left| I_d(f) - M_n^{(p_k,z_k)_{k}}(f) \right|.
\end{align*}

The following lemma shows that there exists a set of good generating vectors $z\in \{1:p-1\}^d$ of relative size $\tau\in (0,1)$ such that the worst-case error is small. Although the result is essentially due to \cite[Theorem~2]{DSWW06}, we give a proof for completeness.

\begin{lemma}\label{lem:set_good_vectors}
    Let $p$ be a prime and $\tau\in (0,1)$. For any $\alpha>1/2$ and $\gamma=(\gamma_u)_{u\subset \N}$ with $0<\gamma_u\leq 1$ and any $d\in\N$, there exists a subset $Z_{p,\tau} =Z_{p,\tau}(\alpha,\gamma,d)$ of $\{1:p-1\}^d$ with $|Z_{p,\tau}|\geq \lceil \tau (p-1)^d\rceil$ such that, for any $z\in Z_{p,\tau}$, 
    \[ e^{\rm det}_{d,\alpha,\gamma}(Q_p^z)\leq \inf_{1/2\leq \lambda<\alpha}\left(\frac{2}{(1-\tau)(p-1)}
    V_d\left(\alpha/\lambda,\gamma^{1/\lambda}\right)\right)^{\lambda} \]
    with $V_d$ as defined in \eqref{eq:def-V}.
\end{lemma}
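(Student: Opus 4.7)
The plan is to run a standard first-moment argument over the generating vectors, in the spirit of \cite[Theorem~2]{DSWW06}, combined with Markov's inequality. For a fixed parameter $\lambda\in[1/2,\alpha)$ I would introduce the auxiliary quantity
\[
T_\lambda(z) \;:=\; \sum_{\substack{h\in\Z^d\setminus\{0\}\\ h\cdot z\equiv_p 0}} \gamma_{\supp(h)}^{1/\lambda}\,\prod_{j\in\supp(h)} |h_j|^{-\alpha/\lambda}.
\]
Because $1/(2\lambda)\le 1$, the elementary inequality $(\sum_i a_i)^q\le\sum_i a_i^q$ for $q\in(0,1]$ and nonnegative $a_i$, applied to the explicit error representation stated just before the lemma, gives
\[
\bigl(e^{\rm det}_{d,\alpha,\gamma}(Q_p^z)\bigr)^{1/\lambda} \;\le\; T_\lambda(z).
\]
Hence it suffices to bound $T_\lambda(z)$ on a large subset of $z\in\{1:p-1\}^d$.

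Next I would swap sums and count, for each fixed nonzero $h$, the number of $z\in\{1:p-1\}^d$ with $h\cdot z\equiv_p 0$. If some component $h_j$ is not divisible by $p$, then fixing the remaining components of $z$ determines $z_j$ uniquely modulo $p$, so at most $(p-1)^{d-1}$ vectors $z$ qualify and the resulting fraction is at most $1/(p-1)$. If instead $h\in p\Z^d\setminus\{0\}$, every $z$ qualifies, but writing $h=ph'$ pulls out a factor $p^{-(\alpha/\lambda)|\supp(h')|}\le p^{-\alpha/\lambda}\le 1/(p-1)$ from the coefficient. Combining the two contributions and recognizing the definition of $V_d$ from \eqref{eq:def-V} yields
\[
\frac{1}{(p-1)^d}\sum_{z\in\{1:p-1\}^d} T_\lambda(z) \;\le\; \frac{2}{p-1}\,V_d\bigl(\alpha/\lambda,\gamma^{1/\lambda}\bigr).
\]
Markov's inequality then provides, for each $\lambda$, a subset $Z^{(\lambda)}\subseteq\{1:p-1\}^d$ of cardinality at least $\lceil\tau(p-1)^d\rceil$ on which $T_\lambda(z)\le\tfrac{2}{(1-\tau)(p-1)}\,V_d(\alpha/\lambda,\gamma^{1/\lambda})$; raising to the $\lambda$-th power converts this into the desired error bound for that particular $\lambda$.

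The main obstacle, and the step requiring the most care, is that the lemma demands a single set $Z_{p,\tau}$ on which the error is bounded by the \emph{infimum} over $\lambda$, whereas the construction above a priori produces a different set $Z^{(\lambda)}$ for each $\lambda$. The way out is to note that the threshold
\[
B_\lambda^\lambda \;:=\; \biggl(\frac{2}{(1-\tau)(p-1)}\,V_d\bigl(\alpha/\lambda,\gamma^{1/\lambda}\bigr)\biggr)^{\lambda}
\]
is independent of $z$. The function $\lambda\mapsto V_d(\alpha/\lambda,\gamma^{1/\lambda})$ is a finite sum of continuous functions of $\lambda$ on $[1/2,\alpha)$ and blows up as $\lambda\to\alpha^-$ because $\zeta(\alpha/\lambda)\to\infty$; therefore $\lambda\mapsto B_\lambda^\lambda$ attains its infimum at some $\lambda^\ast\in[1/2,\alpha)$, which depends only on $p,\tau,\alpha,\gamma,d$. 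Setting $Z_{p,\tau}:=Z^{(\lambda^\ast)}$ then yields a single set of the required size on which $e^{\rm det}_{d,\alpha,\gamma}(Q_p^z)\le B_{\lambda^\ast}^{\lambda^\ast}=\inf_{1/2\le\lambda<\alpha}B_\lambda^\lambda$, as claimed.
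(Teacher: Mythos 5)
Your proposal is correct and follows essentially the same route as the paper's proof: bound $(e^{\rm det}_{d,\alpha,\gamma}(Q_p^z))^{1/\lambda}$ by the sum of $r_{d,\alpha,\gamma}(h)^{-1/\lambda}$ over the dual lattice, average over all $z$ using the counting bound (which the paper cites from \cite[Lemma~4]{KKNU19} and you verify directly), handle $p\mid h$ by the substitution $h=ph'$, apply Markov's inequality for each fixed $\lambda$, and then resolve the $\lambda$-dependence of the good set exactly as the paper does, by noting the $z$-independent bound is continuous on $[1/2,\alpha)$ and blows up as $\lambda\to\alpha^-$, so its infimum is attained at some $\lambda^\ast$ and one may take $Z_{p,\tau}:=Z^{(\lambda^\ast)}$.
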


\begin{proof}
    From the subadditivity of the map $x\mapsto x^c$ for any $x\geq 0$ with a constant $c\in (0,1]$, it holds for any $\lambda\in [1/2,\alpha)$ that
    \begin{align*}
        \left[e^{\rm det}_{d,\alpha,\gamma}(Q_p^z)\right]^{1/\lambda} & \leq \sum_{\substack{h\in \Z^d\setminus \{0\}\\ h\cdot z\equiv_p 0 }}\frac{1}{(r_{d,\alpha,\gamma}(h))^{1/\lambda}}\\
        & =\sum_{\emptyset \neq u\subseteq \{1:d\}}\gamma_u^{1/\lambda}\sum_{\substack{h_u\in (\Z\setminus \{0\})^{|u|}\\ h_u\cdot z_u\equiv_p 0}}\prod_{j\in u}\frac{1}{|h_j|^{\alpha/\lambda}},
    \end{align*}
    where we denote $z_u=(z_k)_{j\in u}$ for a vector $z$. Then we have
    \begin{align*}
        & \frac{1}{(p-1)^d}\sum_{z\in \{1:p-1\}^d}\left[e^{\rm det}_{d,\alpha,\gamma}(Q_p^z)\right]^{1/\lambda} \\
        & \leq \sum_{\emptyset \neq u\subseteq \{1:d\}}\gamma_u^{1/\lambda}\sum_{h_u\in (\Z\setminus \{0\})^{|u|}}\left(\prod_{j\in u}\frac{1}{|h_j|^{\alpha/\lambda}}\right) \frac{1}{(p-1)^d}\sum_{z\in \{1:p-1\}^d}\II_{h_u\cdot z_u\equiv_p 0},
    \end{align*}
    where $\II_{h_u\cdot z_u\equiv_p 0}$ denotes the indicator function that returns $1$ if $h_u\cdot z_u\equiv_p 0$ holds and $0$ otherwise.

    For the equally-weighted average of $\II_{h_u\cdot z_u\equiv_p 0}$ with respect to $z\in \{1:p-1\}^d$, we know that
    \[ \frac{1}{(p-1)^d}\sum_{z\in \{1:p-1\}^d}\II_{h_u\cdot z_u\equiv_p 0} \leq \begin{cases} 1 & \mbox{if $p\mid h_u$,}\\ 1/(p-1) & \mbox{if $p\nmid h_u$,}\\   \end{cases} \]
    where $p\mid h_u$ means that all components of $h_u$ are multiples of $p$ and $p\nmid h_u$ means that there exist at least one component of $h_u$ that is not a multiple of $p$, see, e.g., \cite[Lemma~4]{KKNU19}. Plugging this inequality into the bound above, we obtain
    \begin{align*}
        & \frac{1}{(p-1)^d}\sum_{z\in \{1:p-1\}^d}\left[e^{\rm det}_{d,\alpha,\gamma}(Q_p^z)\right]^{1/\lambda} \\
        & \leq \sum_{\emptyset \neq u\subseteq \{1:d\}}\gamma_u^{1/\lambda}\sum_{\substack{h_u\in (\Z\setminus \{0\})^{|u|}\\ p\mid h_u}}\left(\prod_{j\in u}\frac{1}{|h_j|^{\alpha/\lambda}}\right) \\
        & \quad + \frac{1}{p-1}\sum_{\emptyset \neq u\subseteq \{1:d\}}\gamma_u^{1/\lambda}\sum_{\substack{h_u\in (\Z\setminus \{0\})^{|u|}\\ p\nmid h_u}}\left(\prod_{j\in u}\frac{1}{|h_j|^{\alpha/\lambda}}\right)\\
        & \leq \sum_{\emptyset \neq u\subseteq \{1:d\}}\gamma_u^{1/\lambda}\sum_{h_u\in (\Z\setminus \{0\})^{|u|}}\left(\prod_{j\in u}\frac{1}{|ph_j|^{\alpha/\lambda}}\right) \\
        & \quad + \frac{1}{p-1}\sum_{\emptyset \neq u\subseteq \{1:d\}}\gamma_u^{1/\lambda}\sum_{h_u\in (\Z\setminus \{0\})^{|u|}}\left(\prod_{j\in u}\frac{1}{|h_j|^{\alpha/\lambda}}\right)\\
        & = \sum_{\emptyset \neq u\subseteq \{1:d\}}\frac{\gamma_u^{1/\lambda}}{p^{|u|\alpha/\lambda}}(2\zeta(\alpha/\lambda))^{|u|}+\frac{1}{p-1}\sum_{\emptyset \neq u\subseteq \{1:d\}}\gamma_u^{1/\lambda}(2\zeta(\alpha/\lambda))^{|u|}\\
        & \leq \frac{2}{p-1}\sum_{\emptyset \neq u\subseteq \{1:d\}}\gamma_u^{1/\lambda}(2\zeta(\alpha/\lambda))^{|u|}.
    \end{align*}

    As we now get a bound on the average of $\left[e^{\rm det}_{d,\alpha,\gamma}(Q_p^z)\right]^{1/\lambda}$ over $z\in \{1:p-1\}^d$, Markov inequality states that, for any $\tau\in (0,1)$, there exists a set $Z_{p,\tau,\lambda}\subseteq \{1:p-1\}^d$ with $|Z_{p,\tau,\lambda}|\geq \lceil \tau (p-1)^d\rceil$ such that
    \[ \left[e^{\rm det}_{d,\alpha,\gamma}(Q_p^z)\right]^{1/\lambda}\leq \frac{2}{(1-\tau)(p-1)}\sum_{\emptyset \neq u\subseteq \{1:d\}}\gamma_u^{1/\lambda}(2\zeta(\alpha/\lambda))^{|u|}\]
    or equivalently
    \[ e^{\rm det}_{d,\alpha,\gamma}(Q_p^z)\leq \left(\frac{2}{(1-\tau)(p-1)}\sum_{\emptyset \neq u\subseteq \{1:d\}}\gamma_u^{1/\lambda}(2\zeta(\alpha/\lambda))^{|u|}\right)^{\lambda} \]
    holds for any $z\in Z_{p,\tau,\lambda}$. 
    The right hand side of the previous inequality is a continuous function of $\lambda\in [1/2,\alpha)$ which tends to infinity for $\lambda \to \alpha$. It therefore attains its minimum at some point $\lambda^*\in [1/2,\alpha)$.
    We put $Z_{p,\tau} := Z_{p,\tau,\lambda^*}$. 
    Then
    \[ e^{\rm det}_{d,\alpha,\gamma}(Q_p^z)\leq \inf_{1/2\leq \lambda<\alpha}\left(\frac{2}{(1-\tau)(p-1)}\sum_{\emptyset \neq u\subseteq \{1:d\}}\gamma_u^{1/\lambda}(2\zeta(\alpha/\lambda))^{|u|}\right)^{\lambda} \]
    holds for any $z\in Z_{p,\tau}$. 
    This completes the proof.
\end{proof}

\begin{remark}\label{rem:product_weight}
    Consider the case of product weights, i.e., every $\gamma_u$ for a non-empty subset $u\subseteq \{1:d\}$ is given by $\gamma_u=\prod_{j\in u}\gamma_j$ for $\gamma=(\gamma_1,\gamma_2,\ldots)\in (0,1]^{\N}$. Using $1+x\leq e^x$ for any $x\in \R$, we have
    \begin{align}\label{eq:rem1}
        V_d\left(\alpha/\lambda,\gamma^{1/\lambda}\right) & = -1+\prod_{j=1}^{d}\left( 1+2\gamma^{1/\lambda}_j \zeta(\alpha/\lambda)\right) \notag \\
        & \leq \prod_{j=1}^{d}\exp\left(2\gamma^{1/\lambda}_j \zeta(\alpha/\lambda)\right)=\exp\left(2 \zeta(\alpha/\lambda)\sum_{j=1}^{d}\gamma^{1/\lambda}_j\right) \notag \\
        & \leq \exp\left(2 \zeta(\alpha/\lambda)\left(\sum_{j=1}^{d}\gamma^{1/\alpha}_j\right)^{\alpha/\lambda}\right) =: M_d(\alpha,\gamma,\lambda)
    \end{align}
    Thus, if $\gamma \in \ell_{1/\alpha}$, then $V_d\left(\alpha/\lambda,\gamma^{1/\lambda}\right)$ is bounded independently of the dimension $d$ for any $\lambda\in [1/2,\alpha)$.
\end{remark}

Another important ingredient in our analysis is the following phenomenon of probability amplification, which is known as the median trick. We refer to \cite[Proposition~2.2]{KNR19} and the references therein.

\begin{lemma} \label{lem:median}
    Let $N$ be an odd natural number,
    $X_1,\hdots,X_N$ real random variables,
    and $I \subset \R$ an interval.
    Assume that $\P(X_i \not\in I) \le \tau$ for some $\tau<1/2$. Then
    \[
     \P(\median\{X_1,\hdots,X_N\} \not\in I)
     \,\le\, \frac12 (4\tau (1-\tau))^{N/2}.
    \]
\end{lemma}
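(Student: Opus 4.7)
The plan rests on a simple geometric observation about the median combined with a standard Chernoff-style estimate. The geometric step is the crux: because $I$ is an interval and $N$ is odd, if $m := \median\{X_1,\ldots,X_N\}$ lies outside $I$, then either $m$ is strictly to the left of $I$ or strictly to the right of $I$. In the first case at least $(N+1)/2$ of the $X_i$ are $\le m$ and therefore also lie outside $I$; the second case is symmetric. Setting $Y_i := \II[X_i \notin I]$, this yields
\[
 \P\bigl(\median\{X_1,\ldots,X_N\} \notin I\bigr) \;\le\; \P\Bigl( \sum_{i=1}^N Y_i \;\ge\; \tfrac{N+1}{2} \Bigr).
\]

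Next, I would invoke independence of the $X_i$. This hypothesis is not written in the statement, but it is necessary for the conclusion (take $X_1=\cdots=X_N$ for a counterexample otherwise) and it is satisfied in the application to Algorithm~\ref{alg:median}; presumably it is implicit and explicitly present in the cited \cite[Proposition~2.2]{KNR19}. Under independence, the $Y_i$ are independent Bernoulli variables with success probabilities $\tau_i := \P(X_i\notin I) \le \tau$. Since increasing $\tau_i$ to $\tau$ only increases the tail probability (a coupling argument, or monotonicity in each coordinate), it suffices to prove the bound in the worst case $\tau_i \equiv \tau$, where
\[
 \P\Bigl( \sum_{i=1}^N Y_i \ge \tfrac{N+1}{2} \Bigr) \;=\; \sum_{k=(N+1)/2}^{N} \binom{N}{k} \tau^k (1-\tau)^{N-k}.
\]

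Finally, I would bound this binomial tail by exploiting that $\tau < 1/2$. Writing $k = N/2 + \ell$ with $\ell \ge 0$, the factorization
\[
 \tau^k (1-\tau)^{N-k} \;=\; \bigl(\tau(1-\tau)\bigr)^{N/2}\,\Bigl(\tfrac{\tau}{1-\tau}\Bigr)^{\ell}
\]
shows that the summand is at most $(\tau(1-\tau))^{N/2}$ for every $k$ in the range of summation. Combined with the symmetry identity $\sum_{k=(N+1)/2}^{N}\binom{N}{k} = 2^{N-1}$, valid for odd $N$, this gives
\[
 \sum_{k=(N+1)/2}^{N} \binom{N}{k} \tau^k (1-\tau)^{N-k} \;\le\; 2^{N-1}\bigl(\tau(1-\tau)\bigr)^{N/2} \;=\; \tfrac{1}{2}\bigl(4\tau(1-\tau)\bigr)^{N/2},
\]
which is the required estimate.

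There is no real obstacle: every step is elementary. The only subtle point is the tacit independence assumption, which must be made for the conclusion to hold; beyond that, the argument is the textbook proof of the median trick and could in principle be replaced by any Chernoff bound for the Bernoulli tail, at the cost of producing a slightly different but comparable constant.
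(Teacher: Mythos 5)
Your proof is correct. The paper itself does not prove this lemma at all---it simply cites \cite[Proposition~2.2]{KNR19}---so your argument fills in exactly the standard proof behind that reference: the containment of the event $\{\median\{X_1,\ldots,X_N\}\notin I\}$ in the event that at least $(N+1)/2$ of the $X_i$ fall outside the interval (using that $I$ is an interval and $N$ is odd), followed by the binomial tail bound in which each summand is at most $(\tau(1-\tau))^{N/2}$ because $\tau/(1-\tau)<1$ and $k-N/2\ge 1/2$, and the identity $\sum_{k=(N+1)/2}^{N}\binom{N}{k}=2^{N-1}$. This yields precisely the constant $\frac12(4\tau(1-\tau))^{N/2}$. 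Your observation that independence of the $X_1,\ldots,X_N$ is tacitly assumed is also correct: it is missing from the statement as printed, it is needed for the conclusion, and it holds in the application, since Algorithm~\ref{alg:median} draws the pairs $(p_k,z_k)$ independently.
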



\section{Analysis of the randomized error}

Our findings in the randomized setting are based on the following result, which is due to \cite[Theorem~9]{KKNU19}.
Note that \cite[Theorem~9]{KKNU19} focuses on the case of product weights, and only states the result 
for a set of generating vectors of relative size $\tau=1/2$,
but it is easily adapted to general weights and parameters $\tau \in (0,1)$. 
We will need it for $\tau>1/2$ in order to apply the median trick.

\begin{prop}\label{prop:KKNU}
    For any $\alpha>1/2$, any $\gamma=(\gamma_u)_{u\subset \N}$ with $0<\gamma_u\leq 1$, any $\tau\in (0,1)$, and any prime $p\in \N$, let $Z_{p,\tau}\subseteq \{1:p-1\}^d$ with $|Z_{p,\tau}|\geq \lceil \tau (p-1)^d\rceil$ be as given in Lemma~\ref{lem:set_good_vectors}.
    If $p$ is chosen uniformly from $\mathcal{P}_n$ and
    $z$ is chosen uniformly from $Z_{p,\tau}$, then
    \[
        \E \left| I_d(f) - Q_p^z(f) \right|  \,\le\, C_{\lambda,\delta,\tau} \cdot V_d\left(\alpha/\lambda,\gamma^{1/\lambda}\right)^\lambda \cdot n^{-\lambda-1/2+\delta} \cdot \Vert f \Vert_{d,\alpha,\gamma}
    \]
    for all $f\in \mathcal{H}_{d,\alpha,\gamma}$, any $\lambda \in (1/2,\alpha)$, any $\delta\in (0,\min\{\lambda-1/2,1\})$,
    and all $n\ge 4V_d\left(\alpha/\lambda,\gamma^{1/\lambda}\right)/(1-\tau)$.
    Here, $C_{\lambda,\delta,\tau}$ is a positive constant depending only on $\lambda, \delta$ and $\tau$, and 
    $V_d$ is as defined in \eqref{eq:def-V}.
    \end{prop}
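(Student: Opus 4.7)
The plan is to closely follow the proof of \cite[Theorem~9]{KKNU19}, where the analogous statement is established for product weights and $\tau = 1/2$. The starting point is the Fourier identity
\[
I_d(f) - Q_p^z(f) \,=\, -\sum_{\substack{h \in \Z^d \setminus \{0\} \\ h \cdot z \equiv_p 0}} \hat f(h),
\]
which represents the quadrature error as a sum over aliased Fourier coefficients.

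Two effects will combine to produce the rate $n^{-\lambda-1/2+\delta}$. The first is the worst-case guarantee of Lemma~\ref{lem:set_good_vectors}: for $z \in Z_{p,\tau}$ and $p\ge \lceil n/2\rceil+1$, Cauchy--Schwarz on the Fourier side yields
\[
|I_d(f) - Q_p^z(f)| \,\le\, e^{\rm det}_{d,\alpha,\gamma}(Q_p^z)\, \|f\|_{d,\alpha,\gamma} \,\le\, C_{\tau,\lambda}\, V_d(\alpha/\lambda, \gamma^{1/\lambda})^\lambda\, n^{-\lambda}\, \|f\|_{d,\alpha,\gamma}.
\]
The second is a Bakhvalov-type gain of $n^{-1/2+\delta}$ coming from averaging over the random prime $p \in \mathcal{P}_n$; the key probabilistic input is that, for any fixed nonzero $h\in\Z^d$ and $z$ uniform in $Z_{p,\tau}$,
\[
\P_z(h\cdot z \equiv_p 0) \,\le\, \frac{1}{\tau(p-1)}\,\II_{p\nmid h} + \frac{1}{\tau}\,\II_{p\mid h},
\]
and averaging over $p \in \mathcal{P}_n$ kills the degenerate contribution up to logarithmic factors, since any nonzero integer has at most $\log_2$-many prime divisors in $\mathcal{P}_n$ while $|\mathcal{P}_n|$ is of order $n/\log n$. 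I would then combine the two effects via a H\"older decoupling balancing the ``smoothness side'' (yielding $\|f\|_{d,\alpha,\gamma}$ and $V_d(\alpha/\lambda,\gamma^{1/\lambda})^\lambda$) with the ``probability side'' (yielding $n^{-\lambda-1/2+\delta}$) to arrive at the stated inequality.

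The two deviations from \cite[Theorem~9]{KKNU19} are essentially notational. First, the product-weight expansion $\prod_j(1+\gamma_j^{1/\lambda}\cdot 2\zeta(\alpha/\lambda))$ is to be replaced by the general sum $V_d(\alpha/\lambda,\gamma^{1/\lambda}) = \sum_{\emptyset \ne u}\gamma_u^{1/\lambda}(2\zeta(\alpha/\lambda))^{|u|}$ from \eqref{eq:def-V} throughout. Second, the numerical constants corresponding to $\tau=1/2$ have to be rescaled by $1/(1-\tau)$, as already visible in Lemma~\ref{lem:set_good_vectors}, yielding a constant $C_{\lambda,\delta,\tau}$ that diverges as $\tau\uparrow 1$; the threshold $n \ge 4V_d(\alpha/\lambda,\gamma^{1/\lambda})/(1-\tau)$ ensures the preliminary tail estimates in the Bakhvalov step remain valid. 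The main technical subtlety will be the H\"older decoupling itself, which must be tuned to produce $V_d(\alpha/\lambda,\gamma^{1/\lambda})$ raised to the precise power $\lambda$ while retaining the full Bakhvalov rate; this is entirely parallel to \cite{KKNU19}, and the general-weight and general-$\tau$ modifications enter only as transparent notational substitutions.
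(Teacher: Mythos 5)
Your overall plan (adapt the proof of \cite[Theorem~9]{KKNU19} to general weights and general $\tau$) is the same as the paper's, and several of your ingredients are correct: the membership bound $\P_z(h\cdot z\equiv_p 0)\le \frac{1}{\tau(p-1)}$ for $p\nmid h$ when $z$ is uniform on $Z_{p,\tau}$, the prime-divisor counting over $\mathcal{P}_n$, and the $1/(1-\tau)$ rescaling of constants. The gap is in how you propose to combine the ``two effects''. The worst-case bound $|I_d(f)-Q_p^z(f)|\le e^{\rm det}_{d,\alpha,\gamma}(Q_p^z)\,\Vert f\Vert_{d,\alpha,\gamma}\lesssim V_d^\lambda n^{-\lambda}\Vert f\Vert_{d,\alpha,\gamma}$ cannot be used as a black box and then ``multiplied'' by a Bakhvalov gain of $n^{-1/2+\delta}$ through an unspecified H\"older decoupling. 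If you apply your termwise probability bound to the full aliasing sum $\sum_{h\ne 0,\,h\cdot z\equiv_p 0}\hat f(h)$ you only obtain a rate of order $n^{-1}$ (up to logarithms), and the aggregate worst-case error alone only gives $n^{-\lambda}$; no interpolation between these two numerical bounds yields $n^{-\lambda-1/2+\delta}$. What the argument of \cite{KKNU19} actually uses about the good set is structural: for good $z$ the dual lattice contains \emph{no} nonzero $h$ with $r_{d,\alpha,\gamma}(h)$ below a threshold $B_n\asymp\big((1-\tau)n/V_d\big)^{\lambda}$, so the error is an aliasing sum over the Fourier tail $\{h: r_{d,\alpha,\gamma}(h)\ge B_n\}$ only; it is \emph{after} this truncation that one takes the expectation termwise with your probability bound and applies Cauchy--Schwarz, and the tail restriction is exactly what upgrades the factor $1/n$ to $V_d^{\lambda}\,n^{-\lambda-1/2+\delta}$.

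For the same reason, replacing $\mathcal Z_p$ by $Z_{p,\tau}$ is not a purely notational substitution: the good set in \cite{KKNU19} is defined via the Zaremba-type quantity $\rho_{\alpha,\gamma}(p,z)=\min\{r_{d,\alpha,\gamma}(h)\,:\,h\ne 0,\ h\cdot z\equiv_p 0\}$, whereas your $Z_{p,\tau}$ from Lemma~\ref{lem:set_good_vectors} is defined via a worst-case error bound. The missing (short but essential) bridge, which is precisely the content of the paper's proof, is that
\[
e^{\rm det}_{d,\alpha,\gamma}(Q_p^z)\ \ge\ \max_{\substack{h\in\Z^d\setminus\{0\}\\ h\cdot z\equiv_p 0}}\frac{1}{r_{d,\alpha,\gamma}(h)}
\qquad\Longrightarrow\qquad
\rho_{\alpha,\gamma}(p,z)\ \ge\ \frac{1}{e^{\rm det}_{d,\alpha,\gamma}(Q_p^z)}\ \ge\ \left(\frac{(1-\tau)(p-1)}{2V_d\left(\alpha/\lambda,\gamma^{1/\lambda}\right)}\right)^{\lambda}\ \ge\ \left(\frac{(1-\tau)\,n}{4V_d\left(\alpha/\lambda,\gamma^{1/\lambda}\right)}\right)^{\lambda}
\]
for every $z\in Z_{p,\tau}$ and $p\in\mathcal{P}_n$. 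This lower bound on $\rho_{\alpha,\gamma}(p,z)$ is what licenses running the argument of \cite{KKNU19} verbatim with $B_n$ replaced by the right-hand side (and it is also where the threshold $n\ge 4V_d/(1-\tau)$ and the requirement $\gamma_u\le 1$, ensuring $r_{d,\alpha,\gamma}\ge 1$, enter). Add this observation and replace the vague H\"older decoupling by the truncate--then--average--then--Cauchy--Schwarz order of operations, and your sketch becomes the paper's proof.
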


    \begin{proof}
        The proof follows the lines of \cite[Theorem~9]{KKNU19}.
        We only need to replace $\mathcal Z_p$ with $Z_{p,\tau}$ and $B_n$ with
        \[
         B_n' \,:=\, \left( \frac{(1-\tau) \, n}{4 V_d\left(\alpha/\lambda,\gamma^{1/\lambda}\right)} \right)^\lambda.
        \]
        Then, since
        \[
         e^{\rm det}_{d,\alpha,\gamma}(Q_p^z) \,\ge\, \max_{\substack{h\in \Z^d\setminus \{0\}\\ h\cdot z\equiv_p 0 }}\frac{1}{r_{d,\alpha,\gamma}(h)},
        \]
        any $z\in Z_{p,\tau}$ satisfies
        \[
            \rho_{\alpha,\gamma}(p,z)
            := \min_{\substack{h\in \Z^d\setminus \{0\}\\ h\cdot z\equiv_p 0 }} r_{\alpha,\gamma}(h)
            \ge \frac{1}{e^{\rm det}_{d,\alpha,\gamma}(Q_p^z)}
            \ge \left(\frac{(1-\tau)(p-1)}{2 V_d\left(\alpha/\lambda,\gamma^{1/\lambda}\right)}\right)^{\lambda}
            \ge B_n',
        \]
        so that we can indeed argue as in \cite{KKNU19}.
        Switching from product weights to general weights does not make any difference in the proof, as long as we assume that $\gamma_u$ is bounded above by 1 for all $u\subseteq \{1:d\}$, which ensures $r_{d,\alpha,\gamma}(h)\geq 1$ for all $h\in \Z^d$.
    \end{proof}

    Proposition~\ref{prop:KKNU} already provides an algorithm with a near-optimal randomized error on the Korobov class $\mathcal{H}_{d,\alpha,\gamma}$.
    Our problem is that the algorithm depends on the parameters $\gamma$ and $\alpha$
    via the set $Z_{p,\tau}$
    and hence the algorithm is not universal.
    We fix this issue by choosing the generating vector uniformly from the full set $\{1:p-1\}^d$.
    Since the relative size of $Z_{p,\tau}$ is at least $\tau$, 
    the resulting algorithm is still good with a probability greater than $1/2$,
    as stated in the following corollary.

\begin{corollary}~\label{cor:constant-probability}
    Let $p$ be uniformly distributed on $\mathcal{P}_n$ and let
    $z$ be uniformly distributed on $\{1:p-1\}^d$.
    Then, for any $\alpha>1/2$, 
    any $\gamma=(\gamma_u)_{u\subset \N}$ with $0<\gamma_u\leq 1$, 
    any $f\in \mathcal{H}_{d,\alpha,\gamma}$, 
    any $\lambda \in (1/2,\alpha)$, any $\delta\in (0,\min\{\lambda-1/2,1\})$,
    and all $n\ge 64V_d\left(\alpha/\lambda,\gamma^{1/\lambda}\right)$,
    it holds with probability at least $7/8$ that
    \begin{equation}\label{eq:cor}
        \left| I_d(f) - Q_p^z(f) \right|  \,\le\, 16\, \tilde{C}_{\lambda,\delta} \cdot V_d\left(\alpha/\lambda,\gamma^{1/\lambda}\right)^\lambda \cdot n^{-\lambda-1/2+\delta} \cdot \Vert f \Vert_{d,\alpha,\gamma},
    \end{equation}
    where $\tilde{C}_{\lambda,\delta}$ is a positive constant depending only on $\lambda$ and $\delta$, and
    $V_d$ is as defined in \eqref{eq:def-V}.
\end{corollary}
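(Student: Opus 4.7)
The plan is to deduce the corollary from Proposition~\ref{prop:KKNU} by choosing $\tau$ slightly above $1/2$ --- concretely $\tau = 15/16$ --- and combining a Markov tail bound on the good event $\{z \in Z_{p,\tau}\}$ with the crude estimate $\P(z \notin Z_{p,\tau}) \le 1-\tau$ on its complement. The key observation is that sampling $z$ uniformly from the whole set $\{1:p-1\}^d$ is, up to a pointwise reweighting factor bounded by $1$, a relaxation of sampling $z$ uniformly from $Z_{p,\tau}$, so the expectation bound of Proposition~\ref{prop:KKNU} survives after restriction to the good event.

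Concretely, I would apply Proposition~\ref{prop:KKNU} with $\tau = 15/16$ to obtain a set $Z_{p,\tau} \subseteq \{1:p-1\}^d$ of relative size at least $\tau$ such that, under the auxiliary distribution $\P^*$ with $p \sim \mathrm{Unif}(\mathcal{P}_n)$ and $z \sim \mathrm{Unif}(Z_{p,\tau})$,
\[
\E^*\!\left| I_d(f) - Q_p^z(f) \right| \,\le\, B \,:=\, \tilde C_{\lambda,\delta}\, V_d\!\left(\alpha/\lambda,\gamma^{1/\lambda}\right)^{\lambda} n^{-\lambda-1/2+\delta} \Vert f \Vert_{d,\alpha,\gamma},
\]
where $\tilde C_{\lambda,\delta} := C_{\lambda,\delta,15/16}$. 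The hypothesis $n \ge 64\, V_d(\alpha/\lambda,\gamma^{1/\lambda}) = 4 V_d/(1-\tau)$ of the corollary is exactly the threshold required by Proposition~\ref{prop:KKNU} at this value of $\tau$.

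Next, let $\P$ denote the distribution of the corollary, set $E := |I_d(f) - Q_p^z(f)|$, and let $G := \{z \in Z_{p,\tau}\}$. A direct expansion of sums shows
\[
\E[E\,\II_G] \,=\, \frac{1}{|\mathcal{P}_n|}\sum_{p \in \mathcal{P}_n} \frac{|Z_{p,\tau}|}{(p-1)^d} \cdot \frac{1}{|Z_{p,\tau}|}\sum_{z \in Z_{p,\tau}} E(p,z) \,\le\, \E^*[E] \,\le\, B,
\]
since $|Z_{p,\tau}|/(p-1)^d \le 1$. Markov's inequality applied to the nonnegative random variable $E\,\II_G$ then gives $\P(E\,\II_G \ge 16B) \le 1/16$. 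Since $\{E \ge 16B\} \subseteq \{E\,\II_G \ge 16B\} \cup G^c$ and $\P(G^c) \le 1-\tau = 1/16$, a union bound yields $\P(E \ge 16B) \le 1/8$, which is exactly the complement of the inequality~\eqref{eq:cor} that we want.

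There is no real obstacle in this argument. The only mildly subtle point is that the auxiliary measure $\P^*$ is not the conditional distribution of $(p,z)$ under $\P$ given $G$ (the marginals of $p$ get reweighted by the factor $|Z_{p,\tau}|/(p-1)^d$), but this reweighting goes in the favorable direction, so the restricted expectation under $\P$ is bounded by the unrestricted expectation under $\P^*$, and the rest is Markov plus a union bound.
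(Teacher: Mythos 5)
Your proposal is correct and follows essentially the same route as the paper: apply Proposition~\ref{prop:KKNU} with $\tau=15/16$ (so that $4V_d/(1-\tau)=64V_d$), use Markov's inequality to pass from the expectation bound to a tail bound with factor $16$, and add the probability at most $1-\tau=1/16$ of the bad event $z\notin Z_{p,\tau}$ to get total failure probability $1/8$. Your explicit handling of the reweighting factor $|Z_{p,\tau}|/(p-1)^d\le 1$ is just a more spelled-out version of the paper's union-bound step and does not constitute a different argument.
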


\begin{proof}
    We use Proposition~\ref{prop:KKNU} with $\tau=15/16$.
    Thus, if $p$ is uniformly distributed on $\mathcal{P}_n$ and $z$ is uniformly distributed on $Z_{p,\tau}$, we have
    \[
        \E \left| I_d(f) - Q_p^z(f) \right|  \,\le\, \tilde{C}_{\lambda,\delta} \cdot V_d\left(\alpha/\lambda,\gamma^{1/\lambda}\right)^\lambda \cdot n^{-\lambda-1/2+\delta} \cdot \Vert f \Vert_{d,\alpha,\gamma} =: E
    \]
    with a positive constant $\tilde{C}_{\lambda,\delta}:=C_{\lambda,\delta,\tau=15/16}$ depending only on $\lambda$ and $\delta$, where $C_{\lambda,\delta,\tau}$ is the one appearing in Proposition~\ref{prop:KKNU}.
    Markov's inequality implies
    \[
     \P \left( \left| I_d(f) - Q_p^z(f) \right| > 16 E \,\right) \,\le\, \frac{1}{16}.
    \]
    
    Now let $p$ be uniformly distributed on $\mathcal{P}_n$ 
    and $z$ uniformly distributed on the full set $\{1:p-1\}^d$. Then
    \begin{align*}
    \P&\left( \left| I_d(f) - Q_p^{z}(f) \right|  \,>\, 16 E\,\right)\\
    &\le\, \P( z \not\in Z_{p,\tau}) \,+\, \P\left(z \in Z_{p,\tau} \text{ and } \left| I_d(f) - Q_p^{z}(f) \right|  \,>\, 16 E\,\right)\\
    &\le\, \frac{1}{16} + \frac{1}{16} \,=\, \frac18.
    \end{align*}
    This completes the proof. 
\end{proof}


The success probability $7/8$ in Corollary~\ref{cor:constant-probability}
enables us to use the median trick.
Recall that Algorithm~\ref{alg:median} takes the median of several independent copies
of the algorithm from Corollary~\ref{cor:constant-probability}.
We get the following more explicit and general (in terms of weights) version of part~(i) in Theorem~\ref{thm:main}.

\begin{theorem}\label{thm:randomized-detailed}
    For any $\alpha>1/2$, 
    any $\gamma=(\gamma_u)_{u\subset \N}$ with $0<\gamma_u\leq 1$, 
    any $f\in \mathcal{H}_{d,\alpha,\gamma}$, 
    any $\lambda \in (1/2,\alpha)$, any $\delta\in (0,\min\{\lambda-1/2,1\})$,
    and all $n\ge 64 V_d\left(\alpha/\lambda,\gamma^{1/\lambda}\right)$ with $h(n) \ge \lambda+1/2$,
    Algorithm~\ref{alg:median} satisfies
    \[
        \E\, | I_d(f) - M^{(p_k,z_k)_k}_n(f) |
        \,\le\, C_{\lambda,\delta} \cdot V_d\left(\alpha/\lambda,\gamma^{1/\lambda}\right)^{\lambda} \cdot n^{-\lambda-1/2+\delta} \cdot \Vert f \Vert_{d,\alpha,\gamma},
    \]
    where $C_{\lambda,\delta}$ is a positive constant depending only on $\lambda$ and $\delta$, 
    and 
    $V_d$ is as defined in \eqref{eq:def-V}.
\end{theorem}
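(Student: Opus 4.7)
My plan is to amplify the individual success estimate of Corollary~\ref{cor:constant-probability} via the median trick. Put
\[
E := 16\,\tilde{C}_{\lambda,\delta}\, V_d(\alpha/\lambda,\gamma^{1/\lambda})^\lambda\, n^{-\lambda-1/2+\delta}\, \|f\|_{d,\alpha,\gamma}.
\]
The hypothesis $n\ge 64\,V_d(\alpha/\lambda,\gamma^{1/\lambda})$ permits applying Corollary~\ref{cor:constant-probability} to conclude that, for every $k$, the i.i.d.\ sample $Q_{p_k}^{z_k}(f)$ lies within distance $E$ of $I_d(f)$ with probability at least $7/8$.

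Since the complex median is computed coordinate-wise, I would apply Lemma~\ref{lem:median} with $\tau=1/8$ separately to the real and imaginary parts of the i.i.d.\ sequence $(Q_{p_k}^{z_k}(f))_k$, each relative to the interval of radius $E$ about the corresponding coordinate of $I_d(f)$. A union bound then produces a ``bad event'' $B$ with $\P(B) \le (7/16)^{N/2}$, outside of which $|M_n(f)-I_d(f)| \le E\sqrt{2}$.

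On $B$ I need a deterministic fallback. Following the first display in the proof of Lemma~\ref{lem:set_good_vectors} but dropping the constraint $h\cdot z \equiv_p 0$, one obtains the trivial bound $e^{\rm det}_{d,\alpha,\gamma}(Q_p^z)\le V_d(\alpha/\lambda,\gamma^{1/\lambda})^\lambda$ valid for every $z \in \{1:p-1\}^d$; since the median of finitely many reals always lies between their minimum and maximum, this propagates to $|M_n(f)-I_d(f)|\le \sqrt{2}\, V_d(\alpha/\lambda,\gamma^{1/\lambda})^\lambda\,\|f\|_{d,\alpha,\gamma}$ pointwise. Obtaining this fallback with \emph{exactly} the factor $V_d(\alpha/\lambda,\gamma^{1/\lambda})^\lambda$ is the one point I see as delicate, since any larger factor would spoil the dimension-independence of the final constant.

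Combining the two regimes would yield
\[
\E\,|M_n(f)-I_d(f)| \,\le\, \sqrt{2}\, E \,+\, \sqrt{2}\, V_d(\alpha/\lambda,\gamma^{1/\lambda})^\lambda\, \|f\|_{d,\alpha,\gamma}\, (7/16)^{N/2}.
\]
Finally, since $N \ge 2 h(n)\log_2 n$ and $\log_2(16/7)>1$, the bad-event factor satisfies $(7/16)^{N/2}\le n^{-h(n)}$, and the hypothesis $h(n)\ge \lambda+1/2$ then gives $(7/16)^{N/2}\le n^{-\lambda-1/2+\delta}$. Grouping the contributions yields the claim with $C_{\lambda,\delta}=\sqrt{2}\,(16\,\tilde C_{\lambda,\delta}+1)$.
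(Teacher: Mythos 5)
Your proof is correct and follows essentially the same path as the paper: amplify the constant-probability bound of Corollary~\ref{cor:constant-probability} componentwise via Lemma~\ref{lem:median}, control the exponentially small bad event with a deterministic fallback of the same order $V_d(\alpha/\lambda,\gamma^{1/\lambda})^\lambda\,\Vert f\Vert_{d,\alpha,\gamma}$, and combine. The only cosmetic difference is how the fallback is derived: you obtain $e^{\rm det}_{d,\alpha,\gamma}(Q_p^z) \le V_d(\alpha/\lambda,\gamma^{1/\lambda})^\lambda$ directly by the subadditivity step from the first display of Lemma~\ref{lem:set_good_vectors} (with the congruence constraint dropped), whereas the paper bounds $|I_d(f)-Q_p^z(f)|\le V_d(2\alpha,\gamma^2)^{1/2}\,\Vert f\Vert_{d,\alpha,\gamma}$ by Cauchy--Schwarz and then upper-bounds $V_d(2\alpha,\gamma^2)^{1/2}$ by $V_d(\alpha/\lambda,\gamma^{1/\lambda})^\lambda$; both routes yield the same factor, so the step you flag as ``delicate'' is in fact sound as written.
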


\begin{proof}
The random variable $M^{(p_k,z_k)_k}_n(f)$ is the median of 
$N$ independent copies of the random variable $Q_p^{z}(f)$ from Corollary~\ref{cor:constant-probability}.
Let $R$ denote the right hand side of \eqref{eq:cor}.
Then we have
\[
 \P\left( \left| \Re(I_d(f)) - \Re(Q_p^z(f)) \right| > R \right) \,\le\, \frac18.
\]
By Lemma~\ref{lem:median}, we get
\[
 \P\left( \left| \Re(I_d(f)) - \Re(M^{(p_k,z_k)_k}_n(f)) \right|  > R\right)
 \,\le\, \frac{1}{2}\left(4 \cdot \frac18 \cdot \frac78\right)^{N/2} 
 \le\, 2^{-N/2-1}.
\]
An analogous estimate holds for the imaginary part.
If a complex number has an absolute value larger than $\sqrt{2}R$,
then its real or its imaginary part must be larger than $R$.
This gives
\[
 \P\left( \left| I_d(f) - M^{(p_k,z_k)_k}_n(f) \right|  > \sqrt{2}R\right)
 \,\le\, 2^{-N/2}.
\]
By our choice of $N= 2\lceil h(n) \log_2 n \rceil+1$
and since $h(n) \ge \lambda+1/2$,
we obtain that
\[
 \P\left( \left| I_d(f) - M^{(p_k,z_k)_k}_n(f) \right|  \,>\, \sqrt{2}R\right)
 \,\le\, n^{-h(n)} \,\le\, n^{-\lambda - 1/2}.
\]

Now, we use that for any integer $p$ and for any generating vector $z\in \{1:p-1\}^d$, it holds that
\begin{align*}
    | I_d(f) - &Q_p^{z}(f)|  = \bigg| \sum_{\substack{h\in \Z^d\setminus \{0\}\\ h\cdot z\equiv_p 0}}\hat{f}(h) \bigg| \leq \sum_{\substack{h\in \Z^d\setminus \{0\}\\ h\cdot z\equiv_p 0}} |\hat{f}(h)| \\
    & \leq \sqrt{\sum_{h\in \Z^d\setminus \{0\}}|\hat{f}(h)|^2(r_{d,\alpha,\gamma}(h))^2}\cdot \sqrt{\sum_{h\in \Z^d\setminus \{0\}}\frac{1}{(r_{d,\alpha,\gamma}(h))^2}}\\
    & =:  \Vert f \Vert_{d,\alpha,\gamma}\cdot C_{d,\alpha,\gamma}',
\end{align*}
where the first equality follows from the character property of lattice points, see \cite[Proposition~1.12]{DKP22} and the first and second inequalities follow from the triangle inequality and the Cauchy--Schwarz inequality, respectively.
Here, for the constant $C_{d,\alpha,\gamma}'$, since $\lambda>1/2$, we have 
\begin{align*}
    C_{d,\alpha,\gamma}' & = \sqrt{\sum_{h\in \Z^d\setminus \{0\}}\frac{1}{(r_{d,\alpha,\gamma}(h))^2}}=\sqrt{\sum_{\emptyset \neq u\subseteq \{1:d\}}\gamma_u^2 (2\zeta(2\alpha))^{|u|}}\\
    & \leq \left( \sum_{\emptyset \neq u\subseteq \{1:d\}}\gamma_u^{1/\lambda} (2\zeta(2\alpha))^{|u|/(2\lambda)}\right)^{\lambda} \\
    & \leq \left( \sum_{\emptyset \neq u\subseteq \{1:d\}}\gamma_u^{1/\lambda}(2\zeta(\alpha/\lambda))^{|u|}\right)^{\lambda} =  V_d\left(\alpha/\lambda,\gamma^{1/\lambda}\right)^{\lambda}.
\end{align*}
Putting these things together, we obtain
\begin{align*}
\E\, | I_d&(f) - M^{(p_k,z_k)_k}_n(f) |\\
\,&\le\, \P\left( \left| I_d(f) - M^{(p_k,z_k)_k}_n(f) \right|  \,>\, \sqrt{2}R\right) \cdot C_{d,\alpha,\gamma}' \, \Vert f\Vert_{d,\alpha,\gamma} \,+\, \sqrt{2}R\\ 
&\le\, n^{-\lambda-1/2+\delta} \cdot V_d\left(\alpha/\lambda,\gamma^{1/\lambda}\right)^{\lambda} \cdot \Vert f\Vert_{d,\alpha,\gamma} \cdot \left( 1+16\,\sqrt{2}\cdot \tilde{C}_{\lambda,\delta}\right).
\end{align*}
This completes the proof. 
\end{proof}

\begin{proof}[Proof of Theorem~\ref{thm:main}, part~(i)]
    Part (i) in Theorem~\ref{thm:main} follows from Theorem~\ref{thm:randomized-detailed} 
    by choosing $\lambda= \alpha-\varepsilon/2$ and $\delta= \varepsilon/2$.
    Note that, without loss of generality, we may assume that $\varepsilon$ is small enough such that the required restrictions on $\lambda$ and $\delta$ are fulfilled, i.e., $\varepsilon < \min\{\alpha-1/2,2\}$. Since $h$ is assumed to increase slowly, the bound proven in Theorem~\ref{thm:randomized-detailed} applies for all $n\geq n_0$, where
    \[ n_0=\max\left( \min\left\{ n : h(n)\geq \alpha-\varepsilon/2+1/2\right\}, 64V_d(\alpha,\gamma,\alpha-\varepsilon/2)\right). \]
    Moreover, in case of product weights for $\gamma\in (0,1]^{\N}$, the expression $V_d\left(\alpha/\lambda,\gamma^{1/\lambda}\right)$ is bounded above by $M_d(\alpha,\gamma,\lambda)$ as shown in \eqref{eq:rem1}, and $M_d(\alpha,\gamma,\lambda)$ is further bounded independently of the dimension $d$ if $\gamma \in \ell_{1/\alpha}$ holds.
\end{proof}

\section{Analysis of the deterministic error}

The result in the deterministic setting is proven as in \cite{GL22}.

\begin{theorem}\label{thm:deterministic-detailed}
    For any $\alpha>1/2$, 
    any $\gamma=(\gamma_u)_{u\subset \N}$ with $0<\gamma_u\leq 1$, any $n\geq 2$,
    and any $\tau\in(1/2,1)$,
    Algorithm~\ref{alg:median} satisfies
    \[
    e^{\rm det}_{d,\alpha,\gamma}(M_n^{(p_k,z_k)_k}) \,\le\, \inf_{1/2\leq \lambda<\alpha}\left(\frac{4 \sqrt{2}}{(1-\tau)n}V_d\left(\alpha/\lambda,\gamma^{1/\lambda}\right)\right)^{\lambda}
    \]
    with probability at least $1-\frac12(4\tau(1-\tau))^{N/2}$ 
    and $V_d$ as defined in \eqref{eq:def-V}. 
\end{theorem}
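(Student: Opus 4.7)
The proof combines the set of ``good'' generating vectors from Lemma~\ref{lem:set_good_vectors} with the median amplification from Lemma~\ref{lem:median}. The plan is to identify a high-probability event on which at least $(N+1)/2$ of the pairs $(p_k,z_k)$ produce a lattice rule $Q_{p_k}^{z_k}$ with small worst-case error, and then show that on this event the median $M_n$ inherits such a bound uniformly in $f$.

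First, let $E$ be the event that at least $(N+1)/2$ of the indices $k\in\{1,\ldots,N\}$ satisfy $z_k\in Z_{p_k,\tau}$. Since $z_k$ is uniform on $\{1:p_k-1\}^d$ conditionally on $p_k$, we have $\P(z_k\notin Z_{p_k,\tau})\le 1-\tau<1/2$, and the pairs $(p_k,z_k)$ are independent across $k$. Apply Lemma~\ref{lem:median} to the indicators $Y_k:=\II_{z_k\notin Z_{p_k,\tau}}$ with the singleton interval $\{0\}$; the role of the parameter $\tau$ in that lemma is played here by $1-\tau$. This gives
\[
\P(E^c) \,=\, \P\!\left(\median_k Y_k\ne 0\right) \,\le\, \tfrac12\,(4\tau(1-\tau))^{N/2},
\]
which is precisely the failure probability stated in the theorem.

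On the event $E$, I would bound the worst-case error uniformly over $f$. Fix $f\in\mathcal{H}_{d,\alpha,\gamma}$ with $\|f\|_{d,\alpha,\gamma}\le 1$ and any $\lambda\in[1/2,\alpha)$. Lemma~\ref{lem:set_good_vectors} combined with $p_k-1\ge\lceil n/2\rceil$ implies that every $k$ with $z_k\in Z_{p_k,\tau}$ satisfies
\[
|I_d(f)-Q_{p_k}^{z_k}(f)| \,\le\, e^{\rm det}_{d,\alpha,\gamma}(Q_{p_k}^{z_k}) \,\le\, \left(\frac{4\,V_d(\alpha/\lambda,\gamma^{1/\lambda})}{(1-\tau)\,n}\right)^{\!\lambda} =:B(\lambda).
\]
If $f$ is real-valued, at least $(N+1)/2$ of the real numbers $Q_{p_k}^{z_k}(f)$ lie in $[I_d(f)-B(\lambda),I_d(f)+B(\lambda)]$, so their median does too, giving $|I_d(f)-M_n(f)|\le B(\lambda)$. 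For complex-valued $f=u+iv$ the same argument applied to $u$ and $v$ separately, combined with the short Fourier-side identity
\[
\|u\|_{d,\alpha,\gamma}^2 + \|v\|_{d,\alpha,\gamma}^2 \,=\, \|f\|_{d,\alpha,\gamma}^2
\]
(which follows at once from $r_{d,\alpha,\gamma}(h)=r_{d,\alpha,\gamma}(-h)$), yields $|I_d(f)-M_n(f)|^2\le B(\lambda)^2\,(\|u\|^2+\|v\|^2)=B(\lambda)^2\|f\|^2$. Taking the supremum over $f$ and the infimum over $\lambda$ gives $e^{\rm det}_{d,\alpha,\gamma}(M_n^{(p_k,z_k)_k})\le \inf_\lambda B(\lambda)$, and since $B(\lambda)\le(4\sqrt{2}\,V_d(\alpha/\lambda,\gamma^{1/\lambda})/((1-\tau)n))^\lambda$ trivially, this implies the bound in the theorem (the extra $\sqrt{2}$ inside the base is slack).

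The main conceptual obstacle is the non-linearity of the median, which rules out any operator-norm argument directly. It is circumvented by the ``majority-in-an-interval'' principle used twice: once at the level of indicator variables (to bound $\P(E^c)$) and once at the level of real and imaginary parts of the lattice-rule estimates (to transfer the individual worst-case error to the median). The small Parseval-type computation above is what avoids an unnecessary $\sqrt{2}$ coming from $|z|\le\sqrt{2}\max(|\Re z|,|\Im z|)$; everything else is bookkeeping of constants.
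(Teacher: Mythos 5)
Your proof is correct, and it takes a route that differs from the paper's in two respects worth noting. The paper first establishes a deterministic, $f$-uniform inequality $e^{\rm det}_{d,\alpha,\gamma}(M_n^{(p_k,z_k)_k}) \le \sqrt{2}\,\median_k\, e^{\rm det}_{d,\alpha,\gamma}(Q_{p_k}^{z_k})$, using $|\median\{a_k\}|\le\median\{|a_k|\}$ for real numbers and paying a factor $\sqrt{2}$ to pass through real and imaginary parts, and then applies Lemma~\ref{lem:median} directly to the scalar random variables $e^{\rm det}_{d,\alpha,\gamma}(Q_{p_k}^{z_k})$ with the threshold from Lemma~\ref{lem:set_good_vectors}. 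You instead apply Lemma~\ref{lem:median} only to the indicators of $z_k\in Z_{p_k,\tau}$ (a degenerate interval is fine there, or one can use a plain binomial tail bound), obtaining an event that depends neither on $f$ nor on $\lambda$, and then transfer the individual error bounds to the median by the majority-in-an-interval argument applied separately to $\Re f$ and $\Im f$, recombining via $\|\Re f\|_{d,\alpha,\gamma}^2+\|\Im f\|_{d,\alpha,\gamma}^2=\|f\|_{d,\alpha,\gamma}^2$ (which uses $r_{d,\alpha,\gamma}(h)=r_{d,\alpha,\gamma}(-h)$ and that $Q_p^z$ and $I_d$ commute with taking real parts). The payoff of your version is that the $\sqrt{2}$ disappears: you actually prove the sharper bound $\inf_{1/2\le\lambda<\alpha}\bigl(4V_d(\alpha/\lambda,\gamma^{1/\lambda})/((1-\tau)n)\bigr)^{\lambda}$, which implies the stated one without any juggling of where the $\sqrt2$ sits relative to the exponent $\lambda$; the paper's route is shorter and yields the reusable intermediate statement that the worst-case error of the median rule is controlled by the median of the individual worst-case errors. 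One shared caveat: Lemma~\ref{lem:median} as stated omits the independence hypothesis, which both your argument and the paper's implicitly use and which holds since the pairs $(p_k,z_k)$ are drawn independently.
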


\begin{proof}
    Let $a_1,\ldots,a_N$ be real numbers. We can check that 
    \[ \left|\median\{a_1,\ldots,a_N\}\right| \leq \median\{|a_1|,\ldots,|a_N|\}.\]
    For complex numbers $a_1,\ldots,a_N$, this implies
    \[ \left|\median\{a_1,\ldots,a_N\}\right| \leq \sqrt{2} \cdot \median\{|a_1|,\ldots,|a_N|\}.\]
    Fix $(p_1,z_1),\ldots,(p_N,z_N)$. Then, it follows from the above inequality on the median 
    that the deterministic algorithm $M_n^{(p_k,z_k)_{k}}$ satisfies
    \begin{align*}
        e^{\rm det}_{d,\alpha,\gamma}&(M_n^{(p_k,z_k)_k})
        = \sup_{\substack{f\in \mathcal{H}_{d,\alpha,\gamma}\\ \| f\|_{d,\alpha,\gamma}\leq 1}}\left| I_d(f) - M_n^{(p_k,z_k)_k}(f) \right| \\
        &= \sup_{\substack{f\in \mathcal{H}_{d,\alpha,\gamma}\\ \| f\|_{d,\alpha,\gamma}\leq 1}}\left| \median\left\{I_d(f) - Q_{p_1}^{z_1}(f),
        \hdots, I_d(f) - Q_{p_N}^{z_N}(f)\right\} \right|\\
        &\le \sup_{\substack{f\in \mathcal{H}_{d,\alpha,\gamma}\\ \| f\|_{d,\alpha,\gamma}\leq 1}} \sqrt{2}\cdot \median\left\{\left|I_d(f) - Q_{p_1}^{z_1}(f)\right|,
        \hdots, \left|I_d(f) - Q_{p_N}^{z_N}(f)\right|\right\} \\
        &\le \sqrt{2} \cdot \median\Big\{ e^{\rm det}_{d,\alpha,\gamma}(Q_{p_k}^{z_k}) \ : \ 1\le k \le N \Big\},
    \end{align*}
    compare the proof of \cite[Proposition~3.2]{GSM24}.

    Let $p$ be uniformly distributed on $\mathcal{P}_n$ and let
    $z$ be uniformly distributed on $\{1:p-1\}^d$. Lemma~\ref{lem:set_good_vectors} proves that 
    \[ e^{\rm det}_{d,\alpha,\gamma}(Q_p^z)\leq \inf_{1/2\leq \lambda<\alpha}\left(\frac{4}{(1-\tau)n}V_d\left(\alpha/\lambda,\gamma^{1/\lambda}\right)\right)^{\lambda} \]
    holds with probability at least $\tau$ for any fixed $\tau\in (0,1)$. Thus, if we choose a random prime $p_k$ uniformly from $\mathcal{P}_n$ and a random vector $z_k$ uniformly from $\{1:p_k-1\}^d$ for all $1\leq j\leq k$, we have
    \begin{align*}
        & \P\left( e^{\rm det}_{d,\alpha,\gamma}(M_n^{(p_k,z_k)_k}) > \sqrt{2} \inf_{1/2\leq \lambda<\alpha}\left(\frac{4}{(1-\tau)n}V_d\left(\alpha/\lambda,\gamma^{1/\lambda}\right)\right)^{\lambda}\right) \\
        & \quad \leq\, \P\left( \median\left\{ e^{\rm det}_{d,\alpha,\gamma}(Q_{p_k}^{z_k})\right\} > \inf_{1/2\leq \lambda<\alpha}\left(\frac{4}{(1-\tau)n}V_d\left(\alpha/\lambda,\gamma^{1/\lambda}\right)\right)^{\lambda}\right)\\
        & \quad \le\, \frac{1}{2}(4\tau(1-\tau))^{N/2},
    \end{align*}
    for any $\tau\in (1/2, 1)$, where we used Lemma~\ref{lem:median}.
\end{proof}

\begin{proof}[Proof of Theorem~\ref{thm:main}, part~(ii)]
    Part (ii) in Theorem~\ref{thm:main} follows from Theorem~\ref{thm:deterministic-detailed} 
    by choosing $\lambda= \alpha-\varepsilon$ and $\tau=7/8$. With this choice and 
    $N= 2\lceil h(n) \log_2 n \rceil +1$, we have
    \[ \frac12(4\tau(1-\tau))^{N/2}\leq 2^{-N/2-1}\leq n^{-h(n)}.\]
    Again, in case of product weights for $\gamma\in (0,1]^{\N}$, $V_d\left(\alpha/\lambda,\gamma^{1/\lambda}\right)$ is bounded above by $M_d(\alpha,\gamma,\lambda)$ as shown in \eqref{eq:rem1}, and $M_d(\alpha,\gamma,\lambda)$ is further bounded independently of the dimension $d$ if $\gamma \in \ell_{1/\alpha}$.
\end{proof}

\section{Numerical examples}

We conclude this paper with numerical experiments to validate our theoretical results.
We start with two test functions that have been used in previous work:
\begin{align*}
f_1(x) & =\prod_{j=1}^{d}\left(1+\frac{|4x_j-2|-1}{j^{c_1}}\right),\\
f_2(x) & = \prod_{j=1}^{d}\left(1+\frac{(x_j-1/2)^2\, \sin(2\pi x_j-\pi)}{j^{c_2}}\right).
\end{align*}
The first test function was used in \cite{CXZ24,DGS22} and the second in \cite{G24}.
One can check that, for $d=1$, $\hat{f}_1(h)=O(|h|^{-2})$ and $\hat{f}_2(h)=O(|h|^{-3})$.
This implies that $f_1\in \mathcal{H}_{d,3/2-\epsilon,\gamma}$ and $f_2\in \mathcal{H}_{d,5/2-\epsilon,\gamma}$, respectively, for arbitrarily small $\epsilon>0$.

In what follows, we use Algorithm~\ref{alg:median} to integrate $f_1$ and $f_2$ with various values of $c_1$ and $c_2$. 
We estimate the expected error by the sample average over $100$ independent realizations:
\begin{align}\label{eq:error_app}
    \E_{\omega}\left| I_d(f) - M_n^{\omega}(f) \right| \approx \frac{1}{100}\sum_{r=1}^{100}\left| I_d(f) - M_n^{\omega^{(r)}}(f) \right|,
\end{align}
where we have $I_d(f_i)=1$ for $i\in \{1,2\}$.
In all cases, we set $h(n) = \max(1, \log \log n)$.

The results for dimension $d=20$ are shown in Figure~\ref{fig:result}.
The figures display the error decay as functions of $n$ for various values of the parameter $c_i$.
As a reference, the lines corresponding to $n^{-1}$, $n^{-1.5}$, and $n^{-2}$ are plotted for $f_1$, while the lines corresponding to $n^{-2}$, $n^{-2.5}$, and $n^{-3}$ are plotted for $f_2$.

From the theoretical results, an error decay of order $n^{-2+\epsilon}$ is expected as the optimal rate for $f_1$, while an error decay of order $n^{-3+\epsilon}$ is expected for $f_2$.
To ensure tractable error bounds with the desired rates, we require the conditions $c_1> 2$ and $c_2 > 3$. 
In fact, 
larger values of $c_1$ and $c_2$ lead to a better rate of convergence, approaching the theoretically optimal rate. 
Linear regression for the range $n \geq 10^2$ gives empirical rates of order $n^{-1.974}$ and $n^{-2.683}$
for the largest values of $c_i$, respectively.

\begin{figure}[t]
    \centering
    \begin{subfigure}{0.48\textwidth}
        \centering
        \includegraphics[width=\textwidth]{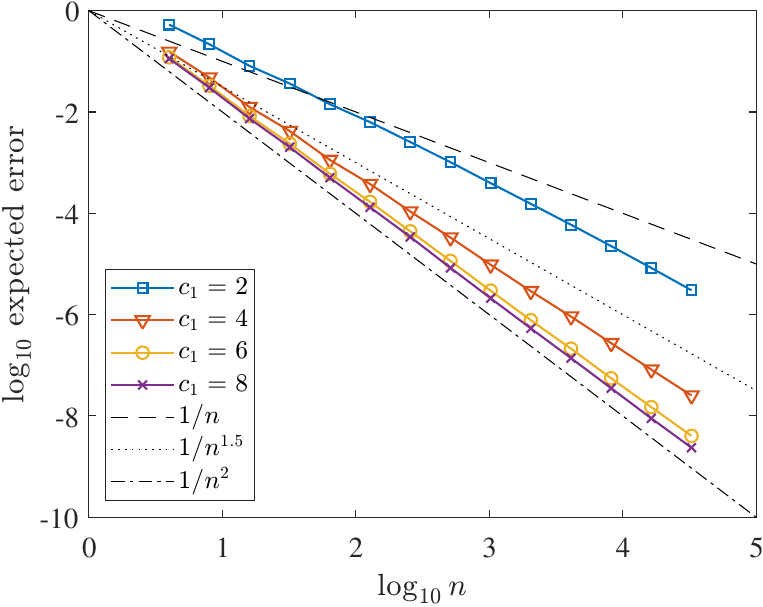}
        \caption{$f_1$}
    \end{subfigure}
    \begin{subfigure}{0.48\textwidth}
        \centering
        \includegraphics[width=\textwidth]{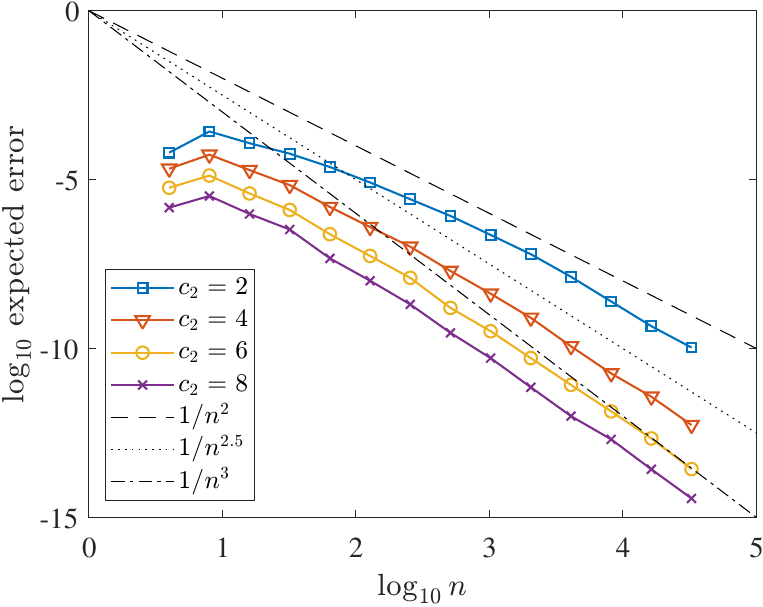}
        \caption{$f_2$}
    \end{subfigure}
    \caption{Results for the test functions $f_1$ and $f_2$ with $d=20$.}
    \label{fig:result}
\end{figure}

We like to recall the main benefit of our algorithm compared to other methods, which is its universality. We did not need to choose any parameters of the algorithm depending on the properties of the test functions. 
To emphasize this point further, let us also test the following parameterized family of functions:
\[
    f_{a,c}(x) = \prod_{j=1}^{d}\left(1+\frac{g_a(x_j)-\int_0^1 g_a(t) \,{\rm d}t}{j^{c}}\right),
\]
for $a>0$ and $c>a+1$, where
\[
 g_a(x) = \Big|x-\frac12\Big|^a \cdot \exp\left( \frac{1}{(2x-1)^2-1} \right).
\]
This function belongs to the Korobov space with smoothness arbitrarily close to $a+1/2$.
Thus, we hope to observe a randomized error rate close to $a+1$.

By choosing product weights $\gamma_j=j^{-\beta}$ with $\beta\in (a+1/2,c-1/2)$
and putting $\alpha = a + 1/2 - \delta$ with $\delta \in (0,a)$, 
one can check that $\gamma \in \ell_{1/\alpha}$ and that the norm of $f_{a,c}$ in $\mathcal{H}_{d,\alpha,\gamma}$ is bounded above independently of $d$, ensuring that Theorem~\ref{thm:main} applies with constants independent of $d$.
Namely, for any $\varepsilon>0$, there is a constant $C>0$ such that
\[
 \E\,|M_n(f_{a,c}) - I_d(f_{a,c})|
 \,\le\, C \,n^{-a-1+\varepsilon}.
\]
The constant $C$ is independent of the dimension $d$ and only depends on the parameters $a, c$ and $\varepsilon$.

In what follows, we test various values of $a\in \{0.1, 0.5, 1, 2.2, 3.4, 3.9\}$. We fix the dimension $d=50$ and the parameter $c=2a+1$. 
Note that to compute the true integral of $g_a$, we use the Matlab function \texttt{integral} with the absolute error tolerance of $10^{-14}$.
The expected error is estimated by \eqref{eq:error_app}, in which $I_d(f_{a,c})=1$.

\begin{figure}[t]
    \centering
    \begin{subfigure}{0.48\textwidth}
        \centering
        \includegraphics[width=\textwidth]{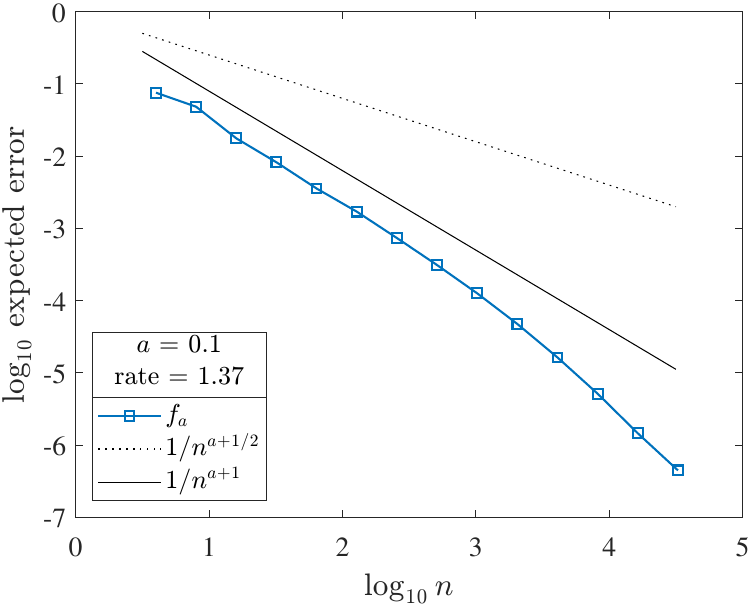}
        \caption{$a=0.1$}
    \end{subfigure}
    \begin{subfigure}{0.48\textwidth}
        \centering
        \includegraphics[width=\textwidth]{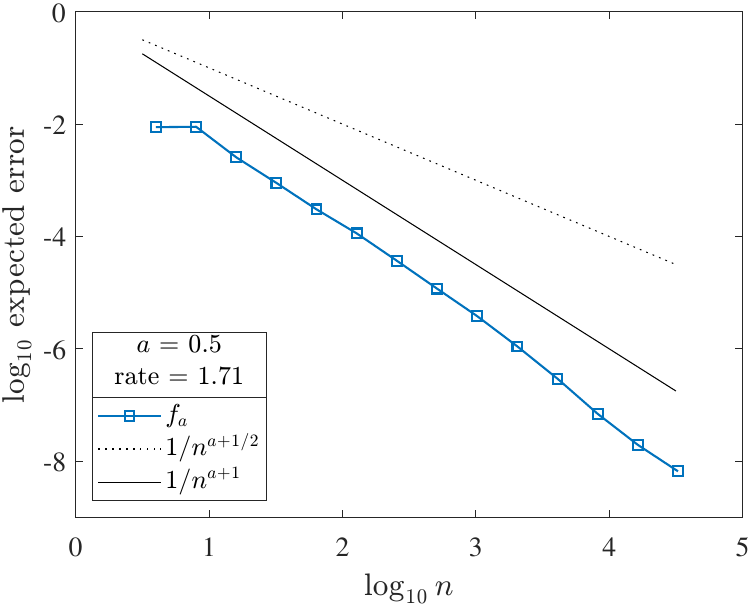}
        \caption{$a=0.5$}
    \end{subfigure}\\
    \vspace{10pt}
    \begin{subfigure}{0.48\textwidth}
        \centering
        \includegraphics[width=\textwidth]{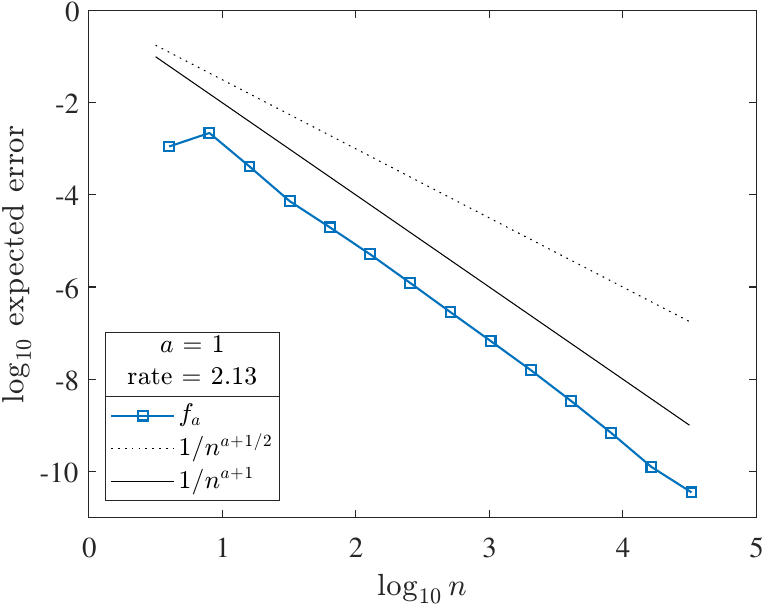}
        \caption{$a=1$}
    \end{subfigure}
    \begin{subfigure}{0.48\textwidth}
        \centering
        \includegraphics[width=\textwidth]{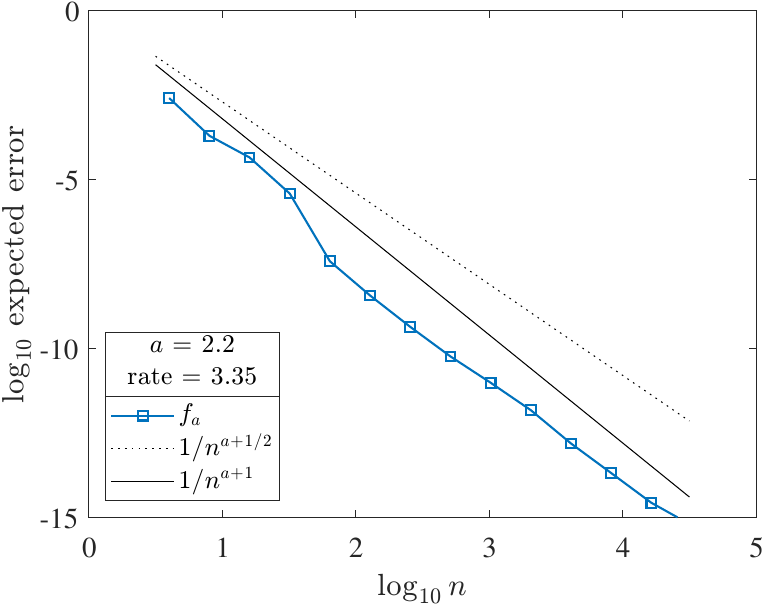}
        \caption{$a=2.2$}
    \end{subfigure}\\
    \vspace{10pt}
    \begin{subfigure}{0.48\textwidth}
        \centering
        \includegraphics[width=\textwidth]{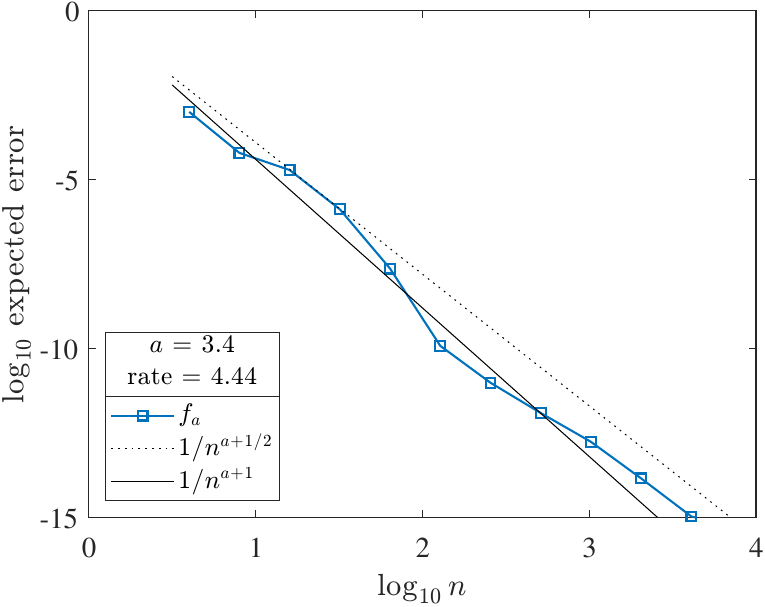}
        \caption{$a=3.4$}
    \end{subfigure}
    \begin{subfigure}{0.48\textwidth}
        \centering
        \includegraphics[width=\textwidth]{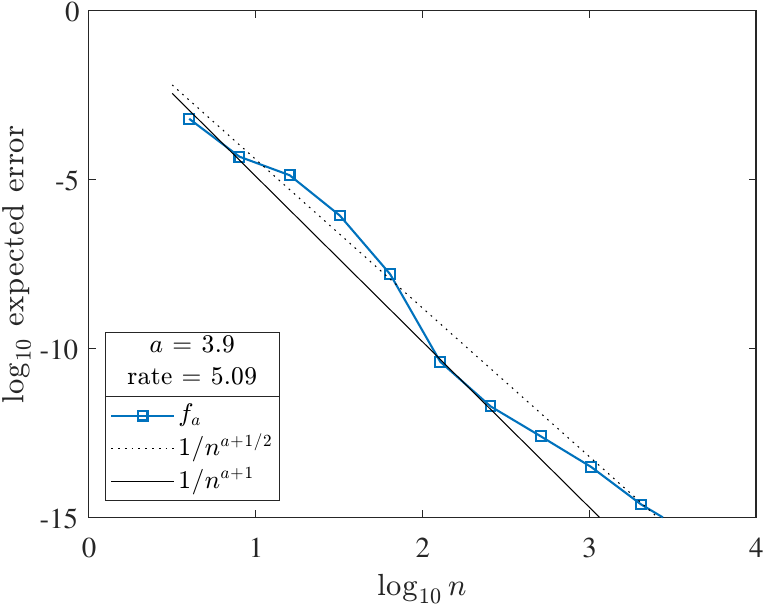}
        \caption{$a=3.9$}
    \end{subfigure}
    \caption{Results for the test function $f_{a,c}$ with $d=50$ and $c=2a+1$. Each subfigure corresponds to a different value of $a$.}
    \label{fig:result2}
\end{figure}

The results are shown in Figure~\ref{fig:result2}, where each subfigure corresponds to a different value of $a$. 
To estimate the empirical error rates, linear regression is applied over the range $n \geq 10$ up to the largest $n$ where the error remains above $10^{-13}$.
The estimated empirical rates are indicated in each subfigure. 
As observed, for all the cases we considered, our algorithm achieves the theoretically optimal (or even better) error rates, automatically exploiting the smoothness of the integrands. This observation supports our theoretical results.

Lastly, we also consider a non-periodic test function:
\begin{align*}
 f(x) &  = \prod_{j=1}^{d}\Big(1+\frac{\theta^j}{8}\left( 31-84x_j^2+8x_j^3+70x_j^4-28x_j^6+8x_j^7\right. \\
       & \qquad \qquad \left.-16\cos(1)-16\sin(x_j)\right)\Big).
\end{align*}
This function was considered also in \cite{DNP14}.
We apply the tent transformation to Algorithm~\ref{alg:median}, as explained in Remark~\ref{rem:nonperiodic}, 
and consider various values of $\theta$.
Considering the half-period cosine expansions of $f$, the cosine coefficients decay as $O(h^{-2})$.
This means that $f$ belongs to the half-period cosine space with smoothness arbitrarily close to $3/2$. We require $|\theta|< 1$ in order to obtain tractable error bounds. 
In the numerical experiments, we hope to see a rate close to $2$.

The numerical results in dimension $d=10$ are shown in Figure~\ref{fig:result-nonper}. One can observe that
varying $\theta$ significantly affects the magnitude of the error. 
With $\theta = 0.9$, the empirical rate is of order $n^{-1.020}$, whereas for $\theta = 0.1$, the rate improves to $n^{-1.906}$, which is very close to the optimal rate.


\begin{figure}[t]
    \centering
    \includegraphics[width=0.48\textwidth]{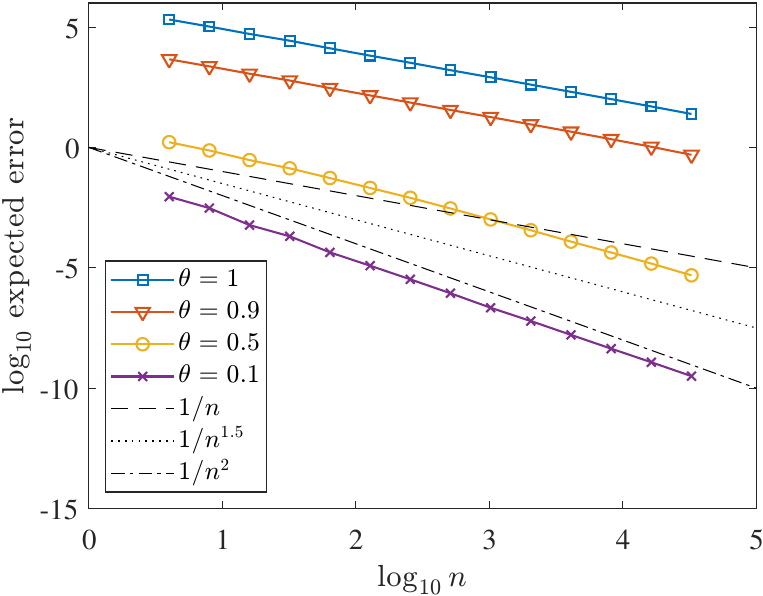}
    \caption{Results for the nonperiodic test function with $d=10$.}
    \label{fig:result-nonper}
\end{figure}

\section*{Acknowledgments}
This research was initiated at the 16th International Conference on Monte Carlo and Quasi-Monte Carlo Methods in Scientific Computing (MCQMC 2024). The authors are grateful to the conference organizers for their kind invitation.

\bibliographystyle{amsplain}
\bibliography{ref}


\end{document}